\documentclass[12pt]{article}
\usepackage[utf8]{inputenc}
\usepackage[a4paper,top=3cm,bottom=3.5cm,left=3cm,right=3cm,marginparwidth=1.75cm]{geometry}

\usepackage{amssymb}
\usepackage{amsmath,amsfonts, amsthm}
\usepackage{tikz-cd, mathtools}
\usepackage{colortbl}
\usepackage{stix}
\usepackage{enumitem}
\usepackage{hyperref}
\usepackage[toc,page]{appendix}
\usepackage{lineno}
\usepackage{soul}
\usepackage{indentfirst}

\newtheorem{theorem}{Theorem}[section]
\newtheorem{proposition}[theorem]{Proposition}
\newtheorem{lemma}[theorem]{Lemma}
\newtheorem{corollary}[theorem]{Corollary}
\newtheorem{definition}[theorem]{Definition}
\newtheorem{example}[theorem]{Example}
\newtheorem{remark}[theorem]{Remark}

\DeclareMathOperator{\Ima}{Im}
\DeclareMathOperator{\Ker}{Ker}

\DeclareMathOperator{\Id}{Id}

\DeclareMathOperator{\rep}{Rep}

\newcommand{\dec}[1]{\lbrbrak #1\rbrbrak}

\newcommand{\Vect}{{\rm \bf Vect}}

\newcommand{\gen}[1]{{\langle #1\rangle}}
\newcommand{\biggen}[1]{{\big\langle #1\big\rangle}}

\newcommand{\cA}{\mathcal{A}}
\newcommand{\cB}{\mathcal{B}}

\newcommand{\cI}{\mathcal{I}}
\newcommand{\cJ}{\mathcal{J}}
\newcommand{\cK}{\mathcal{K}}
\newcommand{\cR}{\mathcal{R}}
\newcommand{\cS}{\mathcal{S}}
\newcommand{\cT}{\mathcal{T}}
\newcommand{\cM}{\mathcal{M}}
\newcommand{\cW}{\mathcal{W}}

\newcommand{\mtb}{{\bf B}}

\newcommand{\E}{{\rm \bf E}}
\newcommand{\R}{{\rm \bf R}}

\newcommand{\coker}{{\rm coker}}

\makeatletter
\newsavebox\myboxA
\newsavebox\myboxB
\newlength\mylenA
\newcommand{\sbm}[1]{{\let\amp=&\left[\begin{smallmatrix}#1\end{smallmatrix}\right]}}
\newcommand{\interval}[2]{{\lbrbrak #1 \mkern1mu, #2 \rbrbrak}}
\makeatother

\graphicspath{{figures/}}

\title{{Partial} Matchings Induced by
	Morphisms between Persistence Modules}
\date{}
\author{R. Gonzalez-Diaz, M. Soriano-Trigueros, A. Torras-Casas}

\begin{document}
	
	\maketitle
	\begin{center}
		\centering Departamento de Matemática Aplicada I, Universidad de Sevilla\\
		\{rogodi, msoriano4, atorras\}@us.es
	\end{center}

\begin{abstract}
We study how to obtain
partial matchings using the block function $\cM_f$, 
induced by
a morphism $f$ between persistence modules.
$\cM_f$ is defined algebraically and
is linear with respect to direct sums of morphisms.
We study some interesting properties of $\cM_f$,
and provide a way of obtaining $\cM_f$ using matrix operations.
\end{abstract}

\section{Introduction}

Persistent homology has become one of the most important tools in Topological Data Analysis \cite{Data, Computational}.
Persistence modules help to understand persistent homology~\cite{structure,categorification,OudotQuiver}.
Specifically, {\it persistence modules indexed by a 
totally ordered set $T$}
are functors from $T$ to the category
 $\Vect_k$, 
 where $k$ is
 a fixed field.
In this setting, 
under mild
assumptions,
a persistence module can be completely described by a multiset of intervals of $T$ called its 
{\it barcode}
\cite{decomposition}.

In practice,
TDA software take discrete data as an input,
and give
a barcode as the output 
\cite{gudhi, giottotda, scikittda}.
In some situations,
the user may want
to repeat the
procedure, 
after applying some minor modifications to the original data.
In such
case, 
two questions arise.
Could
we reuse the calculations
previously made to obtain the new barcode,
taking advantage of the similarities in the input?
Is there any relation between both barcodes?

Answering the first question would speed up the calculations significantly. 
Answering the second question would allow, for example,
to describe how intervals in the barcode change (or are kept unchanged) when the data is modified.
More concretely,
if a change in data induces a morphism $f: V  \pmb{\rightarrow} U$ between persistence modules indexed by $T$, then
answering the second question means
to know 
how $f$ induces
a partial matching $\rep\,\mtb(V) \mapsto
\rep\,\mtb(U)$ between representations of the corresponding barcodes.
It 
is known that 
such a partial matching
cannot be functorial \cite{induced1}.

Trying to answer both questions, we could think of
two possible research directions:
(1) Considering the morphism
$f$ as a persistence module in its own right and describing it in terms of ``simple pieces'',
that
may have an interpretation at the barcode level.
(2) Trying to define rules that produce
a partial matching induced by $f$, 
guaranteeing that it satisfies some desirable properties.
Before going into
details,
let us comment on
the state of the art in
both directions.

\paragraph{1. Decomposition of persistence modules.}
We say that a persistence module $V$ is decomposable when $V \simeq U \oplus W$ with $U, W \neq 0$.
Otherwise,
$V$ is said to be indecomposable.
Under mild assumptions, indecomposable modules indexed 
over $T$ are well-known and are called
interval modules \cite{structure,decomposition}.
Indecomposable  modules of the form
\begin{equation}\label{eq:ladderform}
\begin{tikzcd}
			U \\
			V \arrow[u, "f"]
		\end{tikzcd}
 		\simeq
\begin{tikzcd}
    U_1
    \arrow[r] & 
       U_2
       \arrow[r] & \ldots \arrow[r] & 
          U_n          \\
    V_1
    \arrow[r]\arrow[u] &   V_2
    \arrow[r]\arrow[u] & \ldots \arrow[r]
    & 
    V_n
    \arrow[u]
\end{tikzcd}
\end{equation}
where
$f:V\to U$ is a morphism between persistence modules,
are also well understood for all
$n \leq 4$.
When $n > 4$, the theory becomes increasingly complex,
and for $n \geq 6$ there is no way to parametrize the set of indecomposable modules since the underlying graph (the quiver) is of ``wild'' type (see, for example, 
\cite{realization,ladder,rectangle} for the use of quivers in TDA).
Recall that the category of 
modules of the form (\ref{eq:ladderform}), also known as ladder modules, is
isomorphic to the category of morphisms between persistence modules indexed by the set
${\bf n}=\{1, \ldots, n\}$ (see \cite{matrix}).

\paragraph{2. The induced partial
matching $\chi_f$.}
In \cite{induced1} and \cite{induced2}, 
given $f:V
\rightarrow U$,
the authors provided 
a 
procedure to construct a partial matching
between representations of the barcodes, $\rep\,\mtb(V)$ and $\rep\,\mtb(U)$, denoted by $\chi_f$.
The aim of 
providing such a construction
was to give an explicit proof of the \emph{Stability Theorem} for barcodes \cite{induced1}.
However, this partial matching has some limitations when applied to real data.
In particular, it produces the following undesired result.

Consider the morphism $f$ of persistence modules
determined by the following commutative diagram:
\begin{equation}\label{eq:direct sum}
		\begin{tikzcd}
			U \\
			V \arrow[u, "f"]
		\end{tikzcd}
		\simeq 
		\begin{tikzcd}
			0 \arrow[r] & 0 \arrow[r] & 0 \\
			0 \arrow[r]  \arrow[u, swap]& k \arrow[r, "\Id"] \arrow[u, swap]& k \arrow[u]
		\end{tikzcd}
		\oplus
		\begin{tikzcd}[/tikz/column 3/.style={column sep=-0.5em}]
			k \arrow[r, "\Id" ] & k \arrow[r] & 0 \\
			0 \arrow[r]  \arrow[u]& k \arrow[r] \arrow[u, "\Id", swap]& 0 \arrow[u]
			& .
		\end{tikzcd}
	\end{equation}
	Then $\rep\,\mtb(V) = \{[2,3]_{1}, [2,2]_{1} \}$ and $\rep\,\mtb(U
	) = \{ [1,2]_{1} \}$.
	Note that, looking at the decomposition of $f$,
	one would expect to obtain the following matching:
	\[
	    [2,3]_{1} \longmapsto \emptyset\ , 
	    \qquad 
	    [2,2]_{1} \longmapsto [1,2]_{1}\ .
	\]
	However, $\chi_f$ produces the following
partial matching:
	\[
	    [2,3]_{1} \overset{\chi_f}{\longmapsto} [1,2]_{1}\ , 
	    \qquad 
	    [2,2]_{1} \overset{\chi_f}{\longmapsto} \emptyset
	\]
which is counter-intuitive.

In opposition to $\chi_f$,
we propose $\cM_f$, 
a {\it block} function induced by the morphism $f$, 
which gives a better insight of how $f$ works.
As we will see in Example~\ref{ex:22->12}:
\[
    \cM_f([2,2], [1,2]) = 1\ , \qquad \cM_f([2,3]_{1}, [1,2]) = 0\ .
\]
More concretely, $\cM_f$ 
 is defined entirely algebraically, is linear with respect to the 
direct sum of morphisms
and can be easily calculated using matrix column reductions.
As explained in Subsection~\ref{sec:inducing}, $\cM_f$ also allows to induce partial matchings efficiently.

The paper is organized as follows.
Firstly, the background is introduced in Section~\ref{sec:preliminaries}.
In Section~\ref{sec:operators-im-ker}, we introduce the operators $\Ima^{\pm}$ and $\Ker^{\pm}$ and illustrate them by using \emph{persistence bases}.
In Section~\ref{sec:block},
the aforementioned operators are used to
define the block function 
$\cM_f$ induced by a morphism $f:V\to U$ and we prove it is well-defined.
Its main properties,
together with
an explanation of how $\cM_f$ can be used to compute partial matchings, are given in Section~\ref{sec:properties}.
In Section~\ref{sec:matrix}, we give 
a method to compute
$\cM_f$ 
using matrix calculations.
Finally,
the main conclusions and 
some open questions are 
discussed
in Section~\ref{sec:future}.

\section{Preliminaries}\label{sec:preliminaries}

We recall 
the notions of persistence modules,
decorated endpoints, 
barcodes and partial matchings.
We also introduce some necessary algebraic tools.

\subsection{Persistence modules}\label{sec:persistencemodules}

All vector spaces considered in this paper  are defined over a fixed field $k$ with unit denoted by $1_k$ and they can be infinite dimensional, except in Section~\ref{sec:matrix}, where they 
 are finite dimensional. 
Vectors 
 are expressed in column form.

A persistence module $V$ indexed by a totally ordered set $T$
is a functor from 
$T$
to $\Vect_k$.
Then $V$
consists of 
a set of 
vector spaces $V_p$
for
$p\in T$ and a set of
linear maps $\rho_{pq}: V_p
\rightarrow 
V_q$ for
$p \leq q$ satisfying that $\rho_{ql}\rho_{pq} = \rho_{pl}$ if $p \leq q \leq l$; and $\rho_{pp}$ being the identity map.
The set of linear maps $\{\rho_{pq}\}_{p\leq q}$ is denoted  by $\rho$ and its elements are known as the \emph{structure maps} of $V$.
Since the category of persistence modules is abelian, the direct sum of persistence modules together with the intersection and quotient of persistence 
modules
are also persistence modules
(see for example~\cite[Sec.~2]{categorification}).

We consider the functor category of persistence modules.
In other words,
given two persistence modules, $V$ and $U$, with structure maps, $\rho$ and $\phi$,
a morphism, $f:V\to U$, is given by a set of linear maps $\{ f_{p}
\}_{p
\in T}$,
such that $f_{q}
\rho_{pq}
= 
\phi_{pq}
f_{p}
$ if
$p\leq q$.
A morphism $f$ is \emph{injective} (\emph{surjective}) if all its linear maps $ f_{p}$,
$p
\in T$,
are injective (surjective).
Notice that $\Ima f$ and $\Ker f$ are particular cases of persistence modules.
We assume that all persistence modules that appear in this paper satisfy the \emph{descending chain condition (d.c.c.) for images and kernels.}
In other words,
for $t \geq p_1 \geq p_2 \geq \ldots$ and $t \leq \ldots \leq  q_2 \leq q_1 $,
the following chains stabilize:
\[
V_t \supseteq \Ima \rho_{p_1 t} \supseteq \Ima \rho_{p_2 t} \supseteq \ldots
\qquad \mbox{ and }\qquad 
V_t \supseteq \Ker \rho_{t q_1} \supseteq \Ker \rho_{t q_2} \supseteq \ldots
\]

\subsection{Decorated points and interval modules}

In this subsection, 
we
use the 
notation appearing in \cite{induced1} based on the one introduced 
in \cite[Sec.~2.4]{structure}.
Let $\E$ denote the set of {\it decorated endpoints} defined as
$\E:=\R \times D\cup \{ -\infty, \infty\}$,
where $D=\{-,+\}$.
In what follows, 
decorated points
$(r,-)$ and $(r,+)$  are denoted by
$r^-$ and $r^+$, respectively.
Note that $\E$ can be seen as a totally ordered set stating that
$r^- < r^+$ together with the order inherited by the extended reals.
The sum $+:
\E \times \R \rightarrow \E$  
is defined as   
$r^{\pm} + s := (r + s)^{\pm}$.
There is a bijection between the pairs $\{ (a,b) \in {\E \times \E} : a < b \}$ and the intervals of $\R$.
The following table shows all possible cases:
\[
    \begin{tabular}{c|ccc}
    &$s^-$&$s^+$&$\infty$\\
    \hline
    $-\infty$&$(-\infty,s)$&$(-\infty,s]$&$(-\infty,\infty)$\\
    $r^-$&$[r,s)$&$[r,s]$&$[r,\infty)$\\
    $r^+$&$(r,s)$&$(r,s]$&$(r,\infty)$\\
    \end{tabular}
\]

From now on,
an interval of $\R$ represented by $(a,b) \in \E \times \E$, with $a<b$, is denoted by $\interval{a}{b}$. 
Intervals of 
any other totally ordered set
may turn up.
We use the letters $a,b,c$ and $d$ to denote  elements of $\E$; the letters $r,s$ and $t$ to denote elements of $\R$; and the letters $p$, $q$ 
and $l$ 
to denote elements of a general, totally ordered set $T$.

Given an interval $I$ of $\R$,
the \emph{interval module}, $k_I$, is composed by $k_{It}= k$ for all $t \in I$ and $k_{It}= 0$ otherwise, while the structure maps are given by the identity whenever possible.
As shown in the next subsection,
interval modules are the building blocks of persistence modules. 

\subsection{Decomposition
 of persistence modules}\label{subsec:decom}

The results
that appear in this section
are directly taken from \cite{decomposition}
where the following statement is proven.

\begin{theorem}[Theorem~1.2 of \cite{decomposition}]\label{the:decomposition}
For any persistence module $V$ indexed by $\R$ satisfying the d.c.c. for images and kernels,
we have:
	\[ V \simeq  \bigoplus_{I\in S_V} 
	\big(\oplus_{m_I} 
	k_I\big) \]
where $S_V$ is a set of intervals of $\R$ and $m_I$ is the multiplicity of $k_I$.
\end{theorem}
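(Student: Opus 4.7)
The plan is to assemble the pointwise decompositions of $V(t)$ at each $t\in\R$ provided by Proposition~\ref{pro:predecom} into a single isomorphism of persistence modules, using Lemma~\ref{lem:dirsumcond} as the bridge between sections and direct summands. Concretely, Proposition~\ref{pro:predecom} furnishes, for every interval $I$ of $\R$, a persistence submodule $W_I\simeq\bigoplus_{mI}k_I$ of $V$ together with sections $(F^-_{It},F^+_{It})$ of $V(t)$ satisfying $F^+_{It}=W_{It}\oplus F^-_{It}$ for every $t$. I would let $S$ be the collection of intervals $I$ with $mI\geq 1$; this will serve as the indexing set of the direct sum, with the multiplicity of each $I$ absorbed into the internal structure of $W_I$.

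Next, I would fix $t\in\R$ and consider the family $\{(F^-_{It},F^+_{It}) : I\in S\}$ as sections of $V(t)$. By the disjointness and covering properties asserted in Proposition~\ref{pro:predecom}, this family satisfies exactly the hypothesis of Lemma~\ref{lem:dirsumcond}, which then yields $V(t)=\bigoplus_{I\in S} W_{It}$ as vector spaces. It remains to promote this collection of vector-space isomorphisms, one at each $t$, into a single isomorphism of persistence modules: since each $W_I$ is already a persistence submodule of $V$, the structure maps of $V$ restrict to those of $\bigoplus_{I\in S} W_I$, so the inclusions $W_I\hookrightarrow V$ assemble into a morphism $\bigoplus_{I\in S}W_I\to V$ of persistence modules that is a pointwise isomorphism, and therefore an isomorphism.

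The main obstacle to this argument is essentially already encapsulated in Proposition~\ref{pro:predecom}: the genuine difficulty lies in simultaneously constructing, across all intervals and all indices $t$, the sections $(F^-_{It},F^+_{It})$ and verifying their disjointness and covering property — which rests on Lemma~\ref{lem:election}(c)--(d) guaranteeing that the subspaces $V^\pm_{It}$ transport correctly along the structure maps, and on the descending chain condition guaranteeing that the relevant intersections and unions stabilise. Once Proposition~\ref{pro:predecom} is in hand, the present theorem is the formal assembly step described above, and no further delicate work is required beyond checking that the pointwise direct-sum decomposition is natural in $t$, which is immediate from the submodule property of each $W_I$.
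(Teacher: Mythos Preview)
Your proposal is correct and follows precisely the route the paper sketches: the paper does not spell out a proof but simply states that the theorem is obtained by combining Lemma~\ref{lem:dirsumcond} and Proposition~\ref{pro:predecom}, which is exactly the assembly you describe. Your additional remark that the pointwise isomorphisms glue into a persistence-module isomorphism because each $W_I$ is a submodule of $V$ is the only point the paper leaves implicit, and you handle it correctly.
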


The proof of the theorem uses the operators $\Ima^{\pm}$ and $\Ker^{\pm}$ (which are introduced in Section~\ref{sec:operators-im-ker}) as well as the concept of \emph{section}.
A \emph{section} of a vector space 
$A$ 
is a pair of vector spaces 
$(F^-,F^+)$
such that $F^- \hookrightarrow F^+ \hookrightarrow 
A$. We say that a set $\{ (F^-_\lambda,F^+_\lambda) : \lambda\in \Lambda \}$ of sections (with $\Lambda$ its index set)
of
$A$ 
\emph{is disjoint} if, for all $\lambda\neq \mu$,
either $F^+_\lambda \hookrightarrow F^-_\mu$ or $F^+_\mu \hookrightarrow F^-_\lambda$.

Sections are used in Lemma~\ref{lem:BandV} and Theorem~\ref{the:nnested}. In particular, we use the following result,
whose justification
is part of 
the proof of Theorem~6.1 in \cite{decomposition}.
\begin{lemma}\label{lem:subsumcond}
	Suppose that $\{(F^-_\lambda,F^+_\lambda) : \lambda\in \Lambda \}$ is a disjoint set of
	sections of a vector space $A$.
    Then
	\[
	    \bigoplus_{\lambda\in\Lambda} 
	    \big( F^+_\lambda\, \big/ \,F^-_\lambda\big)
	   \,
	   \hookrightarrow \,A\ .
    \]
\end{lemma}
\subsection{Barcodes and partial matchings}

A \emph{multiset} is a pair $(S,m)$ where $S$ is a set and $m: S \rightarrow \mathbb{N} \cup \{\infty\}
$ represents the multiplicity of 
the elements of $S$.
An element of the multiset $(S,m)$  is denoted by the pair $(I, m_I
)$ where $I
\in S$ and $m_I = m(I)$.
The \emph{barcode} of a persistence module $V$ is the multiset 
$\mtb (V) = (S_V, m)$ where 
$S_V$ is the set of 
intervals that appear in 
the decomposition
of $V$,
and
$m_I$ is
the multiplicity of $I\in S$.
The \emph{representation of a multiset} $(S,m)$ is the set
\[
\rep(S,m) = \{(I,i
) \in S \times \mathbb{N} : i
\leq m_I
) \} \, .
\]
From now on, we 
use the notation $I_i
$ instead of $(
I,i
)$.

\begin{example}
    Consider the persistence module 
    \[
    U\simeq
    \begin{tikzcd}[row sep=large, column sep = large]
		k^2
		\arrow[r, "\sbm{ 1 \amp 0 \\ 0 \amp 1 \\ 0 \amp 0}"] & k^3
		\arrow[r, "\sbm{ 0 \amp 0 \amp 1 }"] & k
	\end{tikzcd}
    \]
    that can be decomposed as
        \[
        k_{[1,2]} \oplus k_{[1,2]} \oplus k_{[2,3]}\ .
    \]
    Then
    its barcode is $\mtb(U) = \{ ([1,2],2), ([2,3],1) \}$
    and the
    representation of its barcode is
    $\rep \mtb(U)$ $=$ $\{ [1,2]_1, [1,2]_2, [2,3]_1\} $, which can be displayed as:
    \begin{center}
    \includegraphics[scale=0.33]{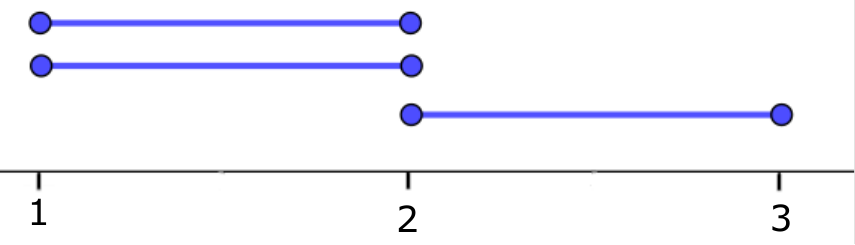}
    \end{center}
\end{example}

Given two barcodes, $\mtb_1$ and $\mtb_2$,
a \emph{partial matching}
between $\rep \mtb_1$ and $\rep \mtb_2$
is
a bijection
$\sigma: R_{1} \rightarrow
R_{2}$ where
$R_1 \subset \rep \mtb_1$ and $R_2 \subset \rep \mtb_2$.
By abuse of notation,
we might write instead $\sigma: \rep \mtb_1 \rightarrow
\rep \mtb_{2}$, and say $\sigma(I) = \emptyset$ when we mean $I \notin R_1$.
\begin{definition}
A \emph{block function}
between
two barcodes $\mtb_{1} = (S_1, m)$ and $\mtb_{2} = (S_2, n)$
is a function $\cM : S_1 \times S_2 \longrightarrow \mathbb{Z}_{\geq 0} \cup \{\infty \}$ such that:
\[
\sum_{J \in S_2} \cM(I, J) \leq m_I\ .
\]
\end{definition}
\begin{remark}\label{rem:partial}
Note that when a block function satisfies
\[
    \sum_{I \in S_1} \cM(I, J) \leq n_J\ ,
\]
it is straightforward to show that $\cM$ induces a partial matching between $\rep \mtb_1$ and $\rep \mtb_2$.
\end{remark}

\subsection{
\texorpdfstring{Persistence Bases}{Persistence Bases}
}

A persistence 
 basis \cite{distributed, structure} for a persistence module $V$ is an isomorphism 
\[
    \alpha:
    \bigoplus_{i\in \Gamma}\, k_{\interval{a_i}{b_i}} \rightarrow V\ ,
\]
where
$\Gamma$ is an index set. By Theorem~\ref{the:decomposition}, such persistence bases exist for all persistence modules satisfying the d.c.c. condition for images and kernels. The \emph{persistence generator} $\alpha_i:k_{\interval{a_i}{b_i}} \rightarrow V$ is defined as the 
morphism 
$\alpha$ restricted to $k_{\interval{a_i}{b_i}}$ for $i \in \Gamma$.
When we write $\alpha_i \sim {\interval{a_i}{b_i}}$, we mean that $ k_{{\interval{a_i}{b_i}}}$ is the domain of $\alpha_i$.
We also specify a persistence basis $\alpha$ by its set of persistence generators, $\cA=\{\alpha_i
\}_{i \in \Gamma}$ and we denote its cardinality by $\#\cA$.

\begin{definition}\label{def:persistence-basis}
Given a subset $\cS = \{ \alpha_i\}_{i \in \Lambda}$ of $\cA$, we define the span  of $\cS$, denoted by $\gen{\cS}$, as the image of the sum of persistence generators of $\cS$, that is  
\[
\gen{\cS}=\Ima\left(\bigoplus_{i \in \Lambda} \alpha_i: \bigoplus_{i\in \Lambda}\, k_{\interval{a_i}{b_i}} \rightarrow V \right)\ .
\]
For $t \in \R$, we define $\cS_t \coloneqq \{\alpha_{it}^1 : i \in \Lambda \mbox{ and }  t\in \interval{a_i}{b_i}\}$ where $\alpha_{it}^1$ means $\alpha_{it}(1_k)$ by abuse of notation. In particular, $ V_t =  \gen{\cA_t}$ and $ \gen{\cS}_t =  \gen{\cS_t}$, where $\cA_t$ and $\cS_t$ are linearly independent sets of vectors in $V_t$.
Notice that in general, the submodule $\gen{\cS}$ of
$V$ depends on $\alpha$.
\end{definition}

The following result is later used to prove that $\cM_f$ is well-defined.
  \begin{lemma}\label{lem:subbasis}
  Let $V$ be a submodule of
    a persistence module $U$ indexed by $\R$ such that $V_t = 0$ for all $t\in \dec{d, \infty}$
    for some $d\in\E$.
    Then, given a persistence basis
    $\alpha$ for
    $U$,
    we have that 
     $V$ is also a submodule of
     \[
       W= \ 
       \gen{\alpha_i : 
       \alpha_i \sim \dec{\cdot, b'} \text{ with } b' \leq d}\ . 
     \]
    \begin{proof}
        By contradiction, assume 
        that $V$ is not a submodule of $W$.
          Then, for some $s\in\dec{-\infty,d}$,
        there exists $x\in V_s$ such that $x \not\in W_s$. Thus,
        \[
            x = \sum_{i\in \Gamma_1
            } x_i\alpha_{is}^1 + \sum_{j\in\Gamma_2} x_{j}\alpha_{js}^1
        \]
        for coefficients $x_i,x_j\in k\setminus \{0\}$ for all $i \in \Gamma_1$ and $j \in \Gamma_2$; where $\Gamma_1$ and $\Gamma_2$ are two disjoint indexing subsets $\Gamma_1 \subseteq \{ i : \alpha_i \sim \dec{\cdot, b'} \text{ with } b'\leq d
        \}$ and $\Gamma_2 \subseteq \{ j:
        \alpha_{j} \sim \dec{\cdot, b'} \text{ with } d<b' \}$.
          Note that $\Gamma_2$ is non empty since 
        $x \not\in W_s$ by hypothesis. We choose $j'$ in $\Gamma_2$,
        such that $\alpha_{j's}^1 \neq 0$ and $x_{j'} \neq 0$.
        Denote the right endpoint of $\alpha_{j'}$ as $d'$.
        Let $t\in\dec{d,d'}$.
        Then
        \[
            0 = \rho_{st} x = \sum_{j\in\Gamma_2'} x_j\alpha_{jt}^1\ ,
        \]
        where $\Gamma_2' = \{ j \in \Gamma_2 : \rho_{st}(\alpha_{js}^1) \neq 0\}$, so that $j' \in \Gamma'_2$.
        However, as $\{ \alpha_{jt}^1\}_{j \in \Gamma'_2}$ is linearly independent by  Definition~\ref{def:persistence-basis},
         $x_j$ must be zero for all $j \in \Gamma_2'$,
        including $j'$,
        leading
        to a contradiction.
    \end{proof}
\end{lemma}

In particular, notice that the submodule $W$ of
$V$ from Lemma~\ref{lem:subbasis} is independent for the chosen persistence basis $\alpha$, as given another persistence basis $\alpha'$ with the corresponding submodule $W'$, we would obtain $W\subseteq W'$ and also $W \supseteq  W'$; thus $W=W'$.  

Finally, let $f:V\rightarrow W$ be a 
morphism between persistence modules and let 
$\cA$ and $\cB$ be persistence bases for $V$ and $U$, respectively.
If $f$ is an injection, then $\#\cA\leq \#\cB$ while if $f$ is a projection, then $\#\cA\geq \#\cB$. This is a consequence of the result of persistence submodules and quotients given in~\cite[Thm.~4.2]{induced1}.

\subsection{Direct Limits}

We need the notion of direct limits to define the block
function ${\cal M}_f$ and the notion of sections to prove it is well-defined.
The definition of direct limit and some useful lemmas involving direct limits
are given in~\ref{app:direct}.
In this section, we just give a characterization of direct limits for our context.

\begin{proposition}[Characterization of direct limits for persistence modules]~\label{prop:direct-limits}
    Let V be a persistence module indexed by an interval $\dec{a,b}$, and consider the endpoint $a <d \leq b$.
    Then 
    \[
        \varinjlim_{t \in \dec{a,d}}V_t\simeq \dfrac{\bigoplus_{p\in \dec{a,d}} V_p}{Z}
    \]
    where $Z$ is the vector space generated by $v_p \oplus -\rho_{pq}(v_p) \in V_p \oplus V_q $, with $p\leq q$ and 
       $p,q\in\dec{a,d}$.
\end{proposition}

\begin{proof}
    See \cite[Def.~3.41]{structure} and \cite[Prop.~3.43]{structure}.
\end{proof}

The direct limit $\varinjlim_{t \in \dec{a,d}}V_t$ does not depend on $a$ and, intuitively, it is isomorphic to the vector space generated by the intervals 
$\interval{c}{b}$ with $c \leq d \leq b$.
Actually, if $d$ represents a closed right endpoint,  i.e. $d=s^+$, it follows from the characterization that
$    \varinjlim_{t \in \interval{a}{s^+}} V_t  = V_s\ .$

\begin{example}
Consider a
persistence module $V$ isomorphic to
\[
  k_{[1,3)} \oplus k_{(1, 3]} \oplus k_{[0, 4)} 
\]
and a persistence basis 
${\cal A}=\{\alpha_1,\alpha_2,\alpha_3\}$
for $V$.
For example, if $v \in V_2$, then
$v = x_1 \alpha_{12}^1 + x_2 \alpha_{22}^1 + x_3 \alpha_{32}^1\; $ 
for some $x_1, x_2, x_3 \in k$.
    Note that for $ 1^+ < a < b \leq 3^-$ and $t,s \in \dec{a,b}$, all structure maps $\rho_{st}$ are injective.
    Then,
       \[
        Z = \gen{\alpha_{it}^1 \oplus-\alpha_{is}^1 \, : \, i=1,2,3;\, t\leq s \in \dec{a,b}}.
    \]
    Applying Proposition~\ref{prop:direct-limits},
    for a fixed $i$, all $\alpha_{it}^1$ represent the same class in $\underset{t \in \interval{a}{b}}{\varinjlim}V_t$.
    Then, denoting this class by $\alpha^1_{i}$,
    we have
    \[
        \varinjlim_{t\in \interval{a}{b}} V_t = \gen{\alpha^1_{i} : i=1,2,3}\,.
    \]
    On the contrary, for $b=3^+$, we have that $(\alpha_{1t}^{1} \oplus 0)  \in Z$ for  $t \in \interval{a}{3^-}$ since  $\rho_{t3} \alpha^1_{1t} = 0$,  then
    \[
        \varinjlim_{t\in \interval{a}{3^+}} V_t = V_3 = \gen{\alpha^1_{i3} : i=2,3}\ .
    \]
\end{example}

\section{The operators \texorpdfstring{$\Ima^{\pm}$}{Im+-} and \texorpdfstring{$\Ker^{\pm}$}{Ker+-}}
\label{sec:operators-im-ker}


The operators $\Ima^{\pm}$ and $\Ker^{\pm}$  were used in~\cite{decomposition} to prove Theorem~\ref{the:decomposition}. In this section, we introduce these operators from the point of view of persistence bases with the aim of using them later to construct $\cM_f$.
Let $V$ be a persistence module indexed by $\R$ 
with structure maps $\rho$ and
with persistence
basis $\cA = \{\alpha_i\}_{i\in \Gamma}\,$. Let $c\in\E$. We consider the following subsets of $\cA$:
\begin{itemize}
  \item $\cI^+_{c}(\cA) = \left\{\alpha_i \in \cA \mid \alpha_i \sim \dec{a_i, b_i} \mbox{ such that } a_i \leq c \right\}$\ ,
  \item $\cI^-_{c}(\cA) = \left\{ \alpha_i \in \cA \mid \alpha_i \sim \dec{a_i, b_i} \mbox{ such that } a_i < c \right\}$\ ,
  \item $\cK^+_{c}(\cA) = \left\{ \alpha_i \in \cA \mid \alpha_i \sim \dec{a_i, b_i} \mbox{ such that } b_i \leq c \right\}$\ ,
  \item $\cK^-_{c}(\cA) = \left\{ \alpha_i \in \cA \mid \alpha_i \sim \dec{a_i, b_i} \mbox{ such that } b_i < c \right\}$\ .
\end{itemize}

As a convention, given
$t \in \R$, we write $\cI^{\pm}_{ct}(\cA)$ instead of $\cI^{\pm}_{c}(\cA)_t$ and $\cK^{\pm}_{ct}(\cA)$ instead of $\cK^{\pm}_{c}(\cA)_t$.
As Lemma~\ref{lem:Im-Ker-plus-minus} shows, $\cI^{\pm}_{ct}(\cA)$ and $\cK^{\pm}_{ct}(\cA)$ generate the following vector spaces, which were 
first introduced
in~\cite{decomposition}:
\[
	\Ima^+_{ct}(V) := \bigcap_{\substack{s\in 
	\interval{c}{t^+}
	}
	} \Ima \rho_{st}\ ,
	\quad  \Ima^-_{ct}(V) := \bigcup_{\substack{s\in \interval{-\infty}{c}} \\ } \Ima \rho_{st}\ , 
\quad\mbox{for $t \in  \interval{c}{\infty}$;}
\]
\[ 
	\Ker^+_{ct}(V) := \bigcap_{\substack{r\in \interval{c}{\infty}}} \Ker \rho_{tr}\ ,
	\quad
	\Ker^-_{ct}(V) :=\bigcup_{\substack{r\in 
	\interval{t^-}{c}
}}\Ker \rho_{tr}\ ,
\quad\mbox{for $t \in  \interval{-\infty}{c}$.}
\]
By convention,
$\Ima^-_{ct}(V) := 0$ if $c = -\infty$, and $\Ker^+_{ct}(V) := V(t)$ if $c = \infty$. 

\begin{lemma}\label{lem:Im-Ker-plus-minus}
  For all $c \in \E$, we have the equalities:
\begin{enumerate}[label=\emph{\alph*})]
\item\label{Im-plus-minus}   
$\Ima^\pm_{ct}(V) = \gen{\cI^\pm_{ct}(\cA)}$ for all $t \in \dec{c, \infty}$,
\item\label{Ker-plus-minus}  $\Ker^\pm_{ct}(V) = \gen{\cK^\pm_{ct}(\cA)}$ for all $t \in \dec{-\infty, c}$.
\end{enumerate}
\end{lemma}

\begin{proof}
    See the first proof in Appendix~\ref{app:persistence-basis}
\end{proof}

\begin{example}\label{ex:ima-ker}
Consider a persistence module $V$ isomorphic to $k_{[1,2]} \oplus k_{[2, 3]}$ and a persistence basis $\cA = \{ \alpha_1 , \alpha_2\}$ for $V$ where the generators are given by
    \[
        \alpha_1 = 
		\begin{tikzcd}
			k \arrow[r, "\sbm{1 \\ 0 }" ] & k^2
			\arrow[r, "\sbm{ 0 \, 1 }"] & k \\
			k \arrow[r] \arrow[u, "\Id"] & k \arrow[r, "\Id"] \arrow[u, "\sbm{ 1 \\ 0 }"]& 0 \arrow[u]
		\end{tikzcd} 
		\qquad\mbox{and}\qquad
        \alpha_2 = 
		\begin{tikzcd}[/tikz/column 3/.style={column sep=-0.5em}]
			k \arrow[r, "\sbm{1 \\ 0 }" ] & k^2
			\arrow[r, "\sbm{ 0 \, 1 }"] & k \\
			0 \arrow[r] \arrow[u] & k \arrow[r, "\Id"] \arrow[u, "\sbm{ 0 \\ 1 }"]& k \arrow[u, "\Id"] & .
		\end{tikzcd}
    \]
    Using Lemma~\ref{lem:Im-Ker-plus-minus}, the subspaces 
    $\Ima_{22}^\pm(V)$ and $\Ker_{32}^\pm(V)$ of $V_2=k^2$ are given by
    \[
        \gen{\cI^+_{22}(\cA)} = \gen{\cK^+_{32}(\cA)} = \gen{\alpha_{12}^1, \alpha_{22}^1 } = k^2 \, , \quad
        \gen{\cI^-_{22}(\cA)} = \gen{\cK^-_{32}(\cA)} = \gen{\alpha^1_{12}} =         \langle 
            \begin{smallmatrix}
                1 \\ 
                0 
            \end{smallmatrix} 
        \rangle.
    \]
\end{example}
As shown in the following result, $\Ima^{\pm}$ and $\Ker^{\pm}$ are linear.
\begin{proposition}\label{prop:ima}
Let $c\in \E$ and let $V^1$ and $V^2$ be two persistence modules. Then,
 \[
 \Ima^\pm_{ct}(V^1 \oplus V^2) =  \Ima^\pm_{ct}(V^1) \oplus \Ima^\pm_{ct}(V^2) \ \mbox{ for all } t \in \dec{c, \infty}\,\mbox{ and}
\]
\[
 \Ker^\pm_{ct}(V^1 \oplus V^2) =  \Ker^\pm_{ct}(V^1) \oplus \Ker^\pm_{ct}(V^2)\ \mbox{ for all } t \in \dec{-\infty, c}\,.
\]
\end{proposition}
\begin{proof}
First, notice that we have the following equalities
\[
    \Ima^-_{ct}(V) = \varinjlim_{s \in \dec{-\infty, c}} \Ima \rho_{st}
    \qquad \Ker^-_{ct}(V) = \varinjlim_{r\in \dec{t^- , c}} \Ker \rho_{tr},
\]
and since direct sums commute with limits (see Lemma~\ref{app:commute-direct-sum}), the result holds for these operators.
In the other case, by the d.c.c. for images and kernels, we know there exists $s \in \dec{c,t^+}$ and $r \in \dec{c,\infty}$ such that,
\[
    \Ima^+_{ct}(V) = \Ima \rho_{st}\,, \qquad 
    \Ima^+_{ct}(V^1) = \Ima \rho_{st}|_{V^1}\,, \qquad \Ima^+_{ct}(V^2) = \Ima \rho_{st}|_{V^2}\,,
\]
\[
    \Ker^+_{ct}(V) = \Ker \rho_{t r}\,, \qquad 
    \Ker^+_{ct}(V^1) = \Ker \rho_{t r}|_{V^1}\,, \qquad \Ker^+_{ct}(V^2) = \Ker \rho_{t r}|_{V^2}\,,
\]
see \cite[Lemma~2.1]{decomposition} for details.
In such cases, the linearity follows directly from the linearity of $\Ima$ and $\Ker$.
\end{proof}

Let $V$ be a persistent module and let $I=\dec{a,b}$ be an interval of $\R$. We use the operators $\Ima^{\pm}$ and $\Ker^{\pm}$ to define the persistence modules $V_I^+, V_I^-$ and $V_I$ pointwisely.
For $t \in I$, define
\begin{align*}
	&V^+_{It} := \Ima^+_{at}(V) \cap \Ker^+_{bt}(V)
	\\
	&V^-_{It} := \Ima^-_{at}(V) \cap \Ker^+_{bt}(V) + \Ima^+_{at}(V) \cap \Ker^-_{bt}(V)
	\\
	&V_{It} := V^+_{It} \, \big/ \,V^-_{It}
\end{align*}
and, for $t\not\in I$, let them all be $0$. 
These modules played an important role in Theorem~\ref{the:decomposition}.
Specifically, $V_{I}$ is isomorphic to the direct sum $\oplus_{m_I} k_I$.
As we will see  later, such modules are also
a key ingredient for the definition of $\cM_f$.
We end this section with an interpretation of  $V_{I}
$ in terms of persistence bases.
Define 
\begin{align*}\nonumber
\cA^+_{I
} = \ & \cI^+_a(\cA) \cap \cK^+_b(\cA) \\
\cA^-_{I
} = \ & (\cI^+_a(\cA) \cap \cK^-_b(\cA)) \cup (\cI^-_a(\cA) \cap \cK^+_b(\cA))\ ,
\end{align*}
which can also be described as:
\begin{align*}\nonumber
\cA^+_{I
} = & \left\{ \alpha_i \in \cA \mid a_i \leq a \mbox{ and } b_i \leq b \right\}
\\ 
\cA^-_{I
} = & \left\{ \alpha_i \in \cA \mid (a_i < a \mbox{ and } b_i \leq b) \mbox{ or } (a_i \leq a \mbox{ and } b_i < b) \right\}\ .
\end{align*}
We define $\cA_{I
t}$  as  $\cA^+_{I
t} \setminus \cA^-_{I
t}$ and obtain:
\[
\cA_{I}
 = \left\{ \alpha_i \in \cA \mid (a_i = a \mbox{ and } b_i = b) \right\}\ .
\]

\begin{proposition}\label{prop:A-plus-minus}
$V_{I
t}^+ = \biggen{\cA^+_{I
t}}$ and
$V_{I
t}^- = \biggen{\cA^-_{I
t}}$ for all $t \in I
$.
\end{proposition}
\begin{proof}
Let $I=\dec{a, b}$ with $a,b\in \E$.
From Lemma~\ref{lem:Im-Ker-plus-minus} and Lemma~\ref{lem:intersection-sub-bases},
we have:
\[
V_{I
t}^+ =
\Ima^+_{at}(V) \cap \Ker^+_{bt}(V) =
\gen{\cI^+_{at}(\cA)} \cap \gen{\cK^+_{bt}(\cA)} = \gen{\cI^+_{at}(\cA) \cap \cK^+_{bt}(\cA)} =
\gen{\cA^+_{I
t}} \ .
\]
And, similarly,
\begin{multline}\nonumber
V_{I
t}^- =
\Ima^-_{at}(V) \cap \Ker^+_{bt}(V) + \Ima^+_{at}(V) \cap \Ker^-_{bt}(V) \\
=\gen{\cI^-_{at}(\cA)} \cap \gen{\cK^+_{bt}(\cA)} + \gen{\cI^+_{at}(\cA)} \cap \gen{\cK^-_{bt}(\cA)}
\\ = \gen{\cI^-_{at}(\cA) \cap \cK^+_{bt}(\cA)} + \gen{\cI^+_{at}(\cA) \cap \cK^-_{bt}(\cA)} =
\gen{\cA^-_{I
t}} \ . \tag*{\qedhere}
\end{multline}
\end{proof}

We are now ready  to present an interpretation of $V_{I}$ in terms of persistence bases.

\begin{theorem}\label{th:V-quotient-base}
$V_{I
t} \simeq \gen{\cA_{I
t}}$ for all $t \in I
$.
\end{theorem}

\begin{proof}
Using Proposition~\ref{prop:A-plus-minus} and Lemma~\ref{lem:quotient-sub-base}, we obtain:
\begin{equation*}
V_{I
t} = \dfrac{V^+_{I
t}}{V^-_{I
t}} = 
\dfrac{\gen{\cA^+_{I
t}}}{\gen{\cA^-_{I
t}}} \simeq 
\gen{\cA^+_{I
t} \setminus \cA^-_{I
t}} =
\gen{\cA_{I
t}}\ . \qedhere
\end{equation*}
\end{proof}

\begin{example}
    Going back to Example~\ref{ex:ima-ker}, we know that the interval $[2,3]$ has multiplicity $1$.
    Then,
    $V_{[2, 3]2}$ must be a space of dimension 1.
    Using Theorem~\ref{th:V-quotient-base}, this can be rapidly checked by the following computation:
    \[
    V_{[2,3]2} \simeq \gen{\cA_{[2,3]2}} = \gen{\cA^+_{[2,3]2} \setminus \cA^-_{[2,3]2}} =  \gen{\{\alpha_{12}^1, \alpha_{22}^1\} \setminus \{\alpha_{12}^1 \} } = \gen{\alpha_{22}^1} \, .
    \]
\end{example}

\section{The block function \texorpdfstring{$\cM_f$}{Mf}
}\label{sec:block}

The definition of $\cM_f$  is formulated 
algebraically,
via operators of persistence modules.
Recall from Subsection~\ref{subsec:decom} that
given a persistence module $V$, the multiplicity $m_I$ of an interval $I$ in the barcode $ \mtb(V)$
is completely determined by the persistence module 
$V_I$ defined in Section~\ref{sec:operators-im-ker}.
Given a morphism $f:V\to U$ between persistence modules, our aim 
is to create a new persistence module, $X_{IJ}$, relating $V_I$ and $U_J$ via $f$. Hence, denoting the respective barcodes of $V$ and $U$ by $\mtb(V)=(S_V, m)$ and $\mtb(U)=(S_U, n)$, from $X_{IJ}$, we obtain the block function $\cM_f$ relating $(I,m_I)$ and $(J, n_J)$ for all pairs of intervals $I\in S_V$ and $J \in S_U$.

For this, let us assume that $V$ and $U$, with structure maps $\rho$ and $ \phi$, respectively, are indexed by $\R$.
Let 
$I = \dec{a,b}$ and $J = \dec{c,d}$.
For
$t \in I\cap J$, define the vector space:
\[
	X_{IJt}
	:= \dfrac{ fV_{It}^+ \cap U_{Jt}^+ }{ fV_{It}^- \cap U_{Jt}^+ +  fV_{It}^+ \cap U_{Jt}^-}\ .
\]
If $t \notin I \cap J$, then $X_{IJt} := 0$.  Notice that when
we write $fV^\pm_{It}$ we mean $f_t(V^\pm_{It})$.
Observe that,
since 
$X_{IJ}$ is
made up of sums, intersections and quotient of persistence modules,
$X_{IJ}$ is  also a persistence module. Intuitively, $X_{IJt}$ is equal to the intersection $f(V_{It})\cap U_{Jt}$. Inspecting such an intersection on the limit, we obtain a way to relate the elements $(I,m_I)$ and $(J,m_J)$ which is induced by $f$.

\begin{definition}
Let $\mtb(V)= (S_V, m)$ and $\mtb(U) = (S_U, n)$.
We define the function
$\cM_f:S_V\times S_U\to \mathbb{Z}_{\geq 0}\cup \{\infty \}$
as,
\[ 
\cM_f(I,J) := \dim \varinjlim_{t\in I \cap J} X_{IJt} \ .
\]
\end{definition}

\begin{example}\label{ex:induced}
    Consider the morphism between persistence modules
    $f:V\rightarrow U$
    given by the following 
    commutative diagram:
	\[
		\begin{tikzcd}
			U \\
			V \arrow[u, "f"]
		\end{tikzcd}
		\simeq 
		\begin{tikzcd}[/tikz/column 3/.style={column sep=-0.5em}]
			k \arrow[r, "\sbm{1 \\ 0 }" ] & k^2
			\arrow[r, "\sbm{ 0 \, 1 }"] & k \\
			0 \arrow[r] \arrow[u]& k \arrow[r, "\Id"] \arrow[u, "\sbm{ 1 \\ 1 }"]& k \arrow[u, "\Id"] & .
		\end{tikzcd}
	\]
	The 	
	respective barcodes are
    \[        \mtb(V) = \{ ([2,3],1) \}\, \qquad \mbox{ and }\qquad \mtb(U) = \{([1,2], 1), ([2,3],1) \}\,.
    \]
    We 
    calculate
    the set $f\cA = \{ f \alpha_1 \}$ which will be used later.
    Concatenating both functions,     $\alpha_1$
    and $f$, we get
    \[
        f \alpha_1 = k_{[2,3]} \overset{Id}{\rightarrow} V \overset{f}{\rightarrow} U =
	f\,.
    \]
    If $I = [2,3]$ and $J = [1,2]$,
    then we can calculate $X_{IJ}$ and $X_{II}$ as follows.
    First,
    \[
        fV^+_{I2} \cap U_{J2}^+ 
        = 
        \langle 
            \begin{smallmatrix}
                1 \\ 
                1 
            \end{smallmatrix} 
        \rangle
        \cap
        \langle 
            \begin{smallmatrix}
                1 \\ 
                0 
            \end{smallmatrix} 
        \rangle
        =
        0 \;\mbox{ and }\;X_{IJ2} = 0\, .
    \]
    Moreover
    \[
        fV^+_{I2} \cap U_{I2}^+ 
        = 
        \langle 
            \begin{smallmatrix}
                1 \\ 
                1 
            \end{smallmatrix} 
        \rangle
        \cap
        \langle 
            \begin{smallmatrix}
                1 & 0\\ 
                0 & 1
            \end{smallmatrix} 
        \rangle
        =
        \langle
            \begin{smallmatrix}
                1 \\ 
                1 
            \end{smallmatrix}
        \rangle\, ,
    \]
    \[
        fV_{I2}^-\cap U_{I2}^+ +  fV_{I2}^+\cap U_{I2}^-
        =
        \langle 
            \begin{smallmatrix}
                0 \\ 
                0 
            \end{smallmatrix} 
        \rangle
        \cap
        \langle 
            \begin{smallmatrix}
                1 & 0\\ 
                0 & 1
            \end{smallmatrix} 
        \rangle
        +
        \langle
            \begin{smallmatrix}
                1 \\ 
                1 
            \end{smallmatrix} 
        \rangle
        \cap
        \langle 
            \begin{smallmatrix}
                1 \\ 
                0 
            \end{smallmatrix} 
        \rangle
        =
        0\, ;
    \]
    and
    \[
        fV^+_{I3} \cap U_{I3}^+ 
        = 
        \langle 
            1
        \rangle
        \cap
        \langle 
            1
        \rangle
        =
        \langle 
            1
        \rangle\, ,
    \]
    \[
        fV_{I3}^- \cap  U_{I3}^+ +  fV_{I3}^+\cap U_{I3}^-
        = 
        \langle 
            0
        \rangle
        \cap
        \langle 
            1
        \rangle
        +
        \langle 
            1
        \rangle
        \cap
        \langle 
            0
        \rangle
        =
        \langle 
            0
        \rangle\, .
    \]
    From these computations, we conclude that
    the only non-zero case for $X$ 
     is the submodule
    $ X_{[2,3][2,3]}: \,0 {\longrightarrow}         
    \langle
        \begin{smallmatrix}
            1 \\ 
            1 
        \end{smallmatrix}
    \rangle
    \overset{\sbm{ 0 \, 1 }}{\longrightarrow} k
    $,
    and then 
    \[
    {\cM}_f([2,3],[1,2])=0\;\mbox{ and }{\cM}_f([2,3],[2,3])=1\ .
    \]
\end{example}

As the following result shows,
the cases of study can be reduced considerably.
\begin{proposition}\label{pro:cadb}
Let $I = \dec{a,b}$ and $J = \dec{c,d}$.
 If $\cM_f\left( I,J
 \right) \neq 0\,,$ then
 \[
 c \leq a \leq d \leq b\ .
 \]
\end{proposition}
\begin{proof}
    By definition, $a<b$ and $c < d$. Moreover, if $I \cap J$
    is empty, then
    $
    \cM_f\left( I,J
    \right)
    $
    is 
    zero.
    Then we just have to prove that $\cM_f\left(I,J
    \right)$ is zero when $a<c$ or $b<d$.
    Following the definitions of $\Ima^{\pm}$ and $\Ker^{\pm}$ given in Section~\ref{sec:operators-im-ker}, we have:
    \begin{align*}
        f_{t}(\Ima_{at}^+(V)) \hookrightarrow\Ima_{at}^+(U) & \qquad  f_{t}(\Ker^+_{bt}(V)) \hookrightarrow  \Ker^+_{bt}(U)\ ,
    \end{align*}
    and:
    \begin{align*}
        \Ima_{at}^+(U) \hookrightarrow  \Ima_{ct}^-(U) & \qquad  \Ker^+_{bt}(U) \hookrightarrow  \Ker^-_{dt}(U)\ ,
    \end{align*}
    when $a<c$ or $b<d$ respectively (see  Lemma~7.1 in \cite{decomposition} for details).
    If $a < c$,
    \begin{equation}\label{eq:ac}
        fV_{It}^+ \cap U_{Jt}^+ \hookrightarrow   f_{t}(\Ima_{at}^+(V)) \cap  \Ker^+_{dt}(U) \hookrightarrow  \Ima_{ct}^-(U) \cap \Ker^+_{dt}(U) \hookrightarrow  U_{Jt}^-\,,
    \end{equation}
    then $fV_{It}^+ \cap U_{Jt}^+ \hookrightarrow  fV_{It}^+ \cap U_{Jt}^- $
    and
    $X_{{IJt}}    = 0$.
    Exchanging $\Ima^{\pm}$ by $\Ker^{\pm}$ in Expresion~(\ref{eq:ac}), the same reasoning works when $b<d$.
\end{proof}

This is an expected result due to the commutativity of $f$ with respect to the structure maps.
An equivalent result for $\chi_f$  is given in \cite[Prop.~5.3]{induced1}.

\subsection{\texorpdfstring{$\cM_f$}{Mf} is well-defined}

Given a morphism $f:V\to U$ between persistence modules $V$ and $U$ with barcodes $\mtb(V)=(S_V,m)$ and $\mtb(U)=(S_U,n)$, respectively,
the aim of this subsection is to prove the following theorem.
\begin{theorem}\label{the:well}
    $\cM_f$ is well-defined, that is, it is a block function.
\end{theorem}

Given $I=\dec{a,b}\in S_V$, we have to prove that $\sum_{J\in S_U}\cM_f(I,J)\leq m_I$. 
Fixing $J=\dec{c,d}\in S_U$,
we can assume that
$d \leq b$  
without loss of generality (see Proposition~\ref{pro:cadb}).
In such case, $I \cap J$ has $d$ as the right endpoint.

To prove Theorem~\ref{the:well}, we would like to relate the vector spaces $X_{IJt}$ with $fV_{It}$ and use that $\dim fV_{It} \leq m_I$ (since by definition $fV_{It}$ means $fV^+_{It}/fV^-_{It}$).
However, 
the vector spaces $X_{IJt}$ and 
$fV_{It}$
are subspaces of
different vector spaces and there is no straightforward way of relating them.
For this reason,
we define a new intermediate vector space, and use it to compare the dimension of $\bigoplus_{J\in S_U} X_{IJt}$ and $fV_{It}$.
Lemma~\ref{lem:BandV} makes this statement precise. To prove it,
for fixed $I\in S_V$ and $c,d\in \E$, we use the following persistence modules indexed by $t\in\R$:
 \[
    A^d_{ct}
    :=
    \dfrac{
    fV_{It}^- \cap U^+_{\dec{c,d} t}
    +
    fV_{It}^+ \cap U^-_{\dec{c,d} t}
    }{
    fV_{It}^- \cap U^+_{\dec{c,d} t}
    }
    \quad \text{ and, } \quad
    B^d_{ct}
    :=
    \dfrac{
    fV_{It}^+ \cap U^+_{\dec{c,d} t}
    }{
    fV_{It}^- \cap U^+_{\dec{c,d} t}
    }\ .
\]
The choice of the vector spaces is justified by the following property, 
\[ 
    \dfrac{B^d_{ct}}{A^d_{ct}} \simeq  X_{IJt}\ .
\]
Due to our previous assumption about $I \cap J$, the following limits are equivalent:
\[
    \tilde A^d_c
    :=
    \varinjlim_{t \in I \cap J} A^d_{ct} 
    =
    \varinjlim_{t \in \dec{-\infty, d}} A^d_{ct}
    \quad \text{ and } \quad
    \tilde B^d_c
    :=
    \varinjlim_{t \in I \cap J} B^d_{ct} 
    =
    \varinjlim_{t \in \dec{-\infty, d}} B^d_{ct}\ .
\]
In the following, we write either $I\cap J$ or $\dec{-\infty, d}$ when taking limits, since it does not affect the calculation.
Due to Lemma~\ref{app:commute-quotients}, we also have:
\[
    \dfrac{\tilde B^d_{c}}{\tilde A^d_{c}} \simeq \varinjlim_{t \in I \cap J} X_{IJt}\ .
\]
Moreover, varying $c$, we can see the vector 
spaces 
$\tilde A^d_{c}$ and $\tilde B^d_{c}$ as persistence modules indexed by $c\in\E\,.$

\begin{lemma}\label{lem:BandV}
    For a fixed $d\in\E \,,$ we have:
    \begin{align*}
        \left( \bigoplus_{c<d} \varinjlim_{t \in I \cap J} X_{I\dec{c,d}t} \right) \hookrightarrow
        \varinjlim_{c
        < d} \tilde B^d_c \hookrightarrow  \varinjlim_{t \in \dec{-\infty, d}} fV_{It}\ .
    \end{align*}
\end{lemma}

\begin{proof}
First, let us prove
       the second injection.
   Notice
   that, for each $t \in I \cap J$,
    \[
        fV_{It}^+ \cap U_{Jt}^+ 
        \hookrightarrow 
        fV_{It}^+ 
        \quad \text{ and } \quad
         (fV_{It}^+ \cap U_{Jt}^+) \cap fV_{It}^- 
         = 
         fV_{It}^- \cap U_{Jt}^+\ ,
    \]
    so that $B_{ct}^d \hookrightarrow fV_{It}$.
    By Lemma~\ref{lem:short},
    we also have that, for each $c$,
    \[
        \tilde B_{c}^d 
        = 
        \varinjlim_{t \in \dec{-\infty, d}} B_{ct}^d
        \hookrightarrow 
        \varinjlim_{t \in \dec{-\infty, d}} fV_{It}\ .
    \]
    Using Lemma~\ref{lem:short} again
    and since $fV_{It}$ does not depend on $c$,
        \[
        \varinjlim_{c < d} \tilde B_{c}^d 
        \hookrightarrow 
        \varinjlim_{c < d} \left( \varinjlim_{t \in \dec{-\infty, d}} fV_{It} \right)
        =
        \varinjlim_{t \in \dec{-\infty, d}} fV_{It}\ .
    \]
    To obtain the other injection, let us prove that 
    $\big\{(\tilde A_{c}^d, \tilde B_c^d):c\in\E\big\}$ is a disjoint set of
    sections of $\varinjlim_{c < d} \tilde B_{c}^d$.
    In other words,
    let us prove that
    \begin{itemize}
        \item for each $c$, 
            \,
            $
                \tilde A_{c}^d 
                \hookrightarrow 
                \tilde B_{c}^d 
                \hookrightarrow 
                \underset{c < d}{\varinjlim} 
                \,\tilde B_{c}^d
            $
            \,
            and,
        \item for all $c' < c$,
            \,
            $
                \tilde B_{c'}^d 
                \hookrightarrow 
                \tilde A_{c}^d 
            $.
    \end{itemize}
    If so, the result  follows by Lemma~\ref{lem:subsumcond} since
    $\tilde B_c^d/\tilde A_c^d$ is isomorphic to $\varinjlim_{t \in I\cap J} X_{IJt}$.
    \\
    Note that $A_{ct}^d \hookrightarrow B_{ct}^d$ by definition.
    Then, by Lemma~\ref{lem:short}, $\tilde A_c^d \hookrightarrow\tilde B_c^d$.
    We also have that
    $\Ima_{c't}^+(U) \hookrightarrow  \Ima_{ct}^-(U)$ if $c' < c$, implying,
    \[
        U^+_{\dec{c',d}t} 
        = 
        \Ima_{c't}^+(U) \cap \Ker_{dt}^+(U) 
        \hookrightarrow
        \Ima_{ct}^-(U) \cap \Ker_{dt}^+(U) 
        \hookrightarrow
        U^-_{\dec{c,d}t}
    \]
    Then, $\big(fV_{It}^+ \cap U^+_{\dec{c',d}t}\big) \cap \big(fV_{It}^- \cap U^+_{\dec{c,d}t}\big) = fV_{It}^- \cap U^+_{\dec{c',d}t}$ and also
    \[
        fV_{It}^+ \cap U^+_{\dec{c',d}t} 
        \hookrightarrow 
        fV_{It}^+ \cap U^-_{\dec{c,d}t}
        \hookrightarrow 
        fV_{It}^+ \cap U^-_{\dec{c,d}t} + fV_{It}^- \cap U^+_{\dec{c,d}t}\ ,
    \]
    so that
    $
        B_{c't}^d
        \hookrightarrow 
        A_{ct}^d 
    $
    and using Lemma~\ref{lem:short} we obtain the inclusion
    $
        \tilde B_{c'}^d
        \hookrightarrow 
        \tilde A_{c}^d 
    $.
    \\
    We still have to prove that,
    \,
    $
        \tilde B_{c}^d 
        \hookrightarrow 
        \underset{c < d}{\varinjlim} 
        \tilde B_{c}^d.
    $
    We already proved that
    $\tilde B_{c'}^d 
    \hookrightarrow 
    \tilde A_{c}^d
    \hookrightarrow 
    \tilde B_{c}^d$
    for each $c' < c$.
    In particular,
    $\tilde B_{c'}^d 
    \hookrightarrow 
    \tilde B_{c}^d$ 
    for each $c' < c$.
   Finally, applying
   Lemma~\ref{lem:monos} we
   obtain the desired
   result.
\end{proof}
\begin{proof}[Proof of Theorem~\ref{the:well}]
We need to prove that $\sum_{J\in S_U} \cM_f(I,J) \leq m_I
$.
Using that $\cM_f(I,J) \neq 0$ for
$I=\dec{a,b}$ and $J=\dec{c,d}$ only if 
$c\leq a\leq d\leq b $,
we can rewrite the sum as
\[
    \sum_{J\in S_U} \cM_f(I,J) 
    = 
    \sum_{d \leq b}\sum_{c < d} \cM_f(I,J) 
    =
    \sum_{d \leq b}\sum_{c < d} \dim \varinjlim_{t \in \dec{-\infty, d}} X_{I \dec{c,d}t} 
\]
which is equivalent to
\[
    \sum_{d \leq b} \dim \left( \bigoplus_{c < d} \varinjlim_{t \in \dec{-\infty, d}} X_{I \dec{c,d}t} \right)
\]
 and, by Lemma~\ref{lem:BandV}, is less or equal to 
\[
     \sum_{d \leq b} \dim \, \varinjlim_{c < d} \tilde B_c^d\ .
\]
Moreover,
given a persistence basis $\cA$ of $fV_I$, we obtain the inequality
\begin{equation}\label{eq:bas}
    \dim \, \varinjlim_{c < d} \tilde B_c^d \leq \#\{ \alpha_i \in \cA: \alpha_i \sim \dec{\cdot, d} \}\ ,    
\end{equation} 
which is deduced later on. 
Then, as we have $V_I \twoheadrightarrow fV_I$,
\begin{equation*}
    \sum_{J\in S_U} \cM_f(I,J)
    \leq
    \sum_{d \leq b} \#\{ \alpha_i \in \cA: \alpha_i \sim \dec{\cdot, d} \} \leq \#\cA \leq m_I\ .
\end{equation*}
    To prove  Inequality~(\ref{eq:bas}),
    recall from the proof of Lemma~\ref{lem:BandV} that $B_{ct}^d$ is a submodule of $fV_{It}$.
   Since $B_{ct}^d = 0$ for $t\in\dec{d,\infty}$, by Lemma~\ref{lem:subbasis}, we have:
   \[
        B_{ct}^d \hookrightarrow \gen{\alpha^1_{it} : \alpha_i \sim \dec{\cdot, b'} \text{ with } b'\leq d }
   \]
   for $t\in\dec{c,d}$.
     By Lemma~\ref{lem:short}, direct limits are compatible with injections and so
  \[
         \varinjlim_{c < d} \tilde B_{ct}^d 
         = 
         \varinjlim_{c < d} \varinjlim_{t \in \dec{-\infty, d}} B_{ct}^d
         \hookrightarrow 
         \varinjlim_{c < d} \varinjlim_{t \in \dec{-\infty, d}} 
         \gen{\alpha^1_{it} : \alpha_i \sim \dec{\cdot, b'} \text{ with } b'\leq d }
   \]
   which is equal to
   \[
    \varinjlim_{t \in \dec{-\infty, d}} 
         \gen{\alpha^1_{it} : \alpha_i \sim \dec{\cdot, b'} \text{ with } b'\leq d }\ ,
    \]
   since it does not depend on $c$.
   Finally, since 
   the class of 
   $\alpha_{it}^1$ is zero in the above limit  if $\alpha_i \sim \dec{\cdot, b'}$ with $b'<d$, we have:
    \begin{align*}
    \dim & \varinjlim_{t \in \dec{-\infty, d}}
         \gen{\alpha^1_{it} : \alpha_i \sim \dec{\cdot, b'} \text{ with } b'\leq d }
         \\
     &\leq
     \dim \varinjlim_{t \in \dec{-\infty, d}} \gen{\alpha^1_{it} : \alpha_i \sim \dec{\cdot, b'} \text{ with } b' = d }.
    \end{align*}
    Then
    \[
        \dim \varinjlim_{c < d} \tilde B_{ct}^d  \leq \# \{\alpha_i : \alpha_i \sim \dec{\cdot, d}\}\ ,
    \]
    which is the same inequality as  (\ref{eq:bas}).
\end{proof}

\section{Properties of \texorpdfstring{$\cM_f$}{Mf}}\label{sec:properties}

Let us study the main properties of $\cM_f$ to obtain a better insight of how it works.
We already saw in Proposition~\ref{pro:cadb} that $\cM_f(\dec{a,b},\dec{c,d})$ is  non-zero only if $c \leq a \leq d \leq b$.
Another important property is its linearity.
\begin{theorem}\label{the:linear}
Given a direct sum of morphisms:
\[
   f^1 \oplus f^2 : V^1 \oplus V^2 \longrightarrow U^1 \oplus U^2
\]
We have that,
\begin{align*}
    X_{IJt}[f^1 \oplus f^2] &=  X_{IJt}[f^1] \oplus X_{IJt}[f^2] \\
    \text{ and }\,
       \cM_{f^1 \oplus f^2}(I,J) &= \cM_{f^1}(I,J) + \cM_{f^2}(I,J).
\end{align*}
\end{theorem}
\noindent
Since direct sums commute with quotients, finite intersections, finite sums and direct sums of persistence modules (see Appendix~\ref{app:direct}), Theorem~\ref{the:linear} is a direct consequence of Proposition~\ref{prop:ima}.

\begin{example}\label{ex:22->12}
    Theorem~\ref{the:linear} allows performing a quick calculation of $\cM_f$ for the morphism $f=f^1\oplus f^2$ given in Expression ~(\ref{eq:direct sum})
    and recalled here:
    \begin{equation*}
		\begin{tikzcd}
			U \\
			V \arrow[u, "f"]
		\end{tikzcd}
		\simeq 
		\begin{tikzcd}
			0 \arrow[r] & 0 \arrow[r] & 0 \\
			0 \arrow[r]  \arrow[u, swap] & k \arrow[r, "\Id"] \arrow[u, swap] & k \arrow[u]
		\end{tikzcd}
		\oplus
		\begin{tikzcd}[/tikz/column 3/.style={column sep=-0.5em}]
			k \arrow[r, "\Id" ] & k \arrow[r] & 0 \\
			0 \arrow[r]  \arrow[u]& k \arrow[r] \arrow[u, "\Id", swap]& 0 \arrow[u] & .
		\end{tikzcd}
	\end{equation*}
	Indeed, $\cM_{f^1}$ is always zero for the first summand, and it is 
	non-zero for the second one only  in the case $\cM_{f^2}([2,2],[1,2])=1$, as expected.
\end{example}

The following proposition explains how  $\cM_f({I,J})$ behaves
when the domain of $f$ is an interval module.
\begin{proposition}\label{pro:intervalToV}
    Consider a non-null morphism $f : k_I \rightarrow U$ and  a persistence basis $\beta$ for $U$.
    Let $\mathcal{J}$ be
    the  set of intervals 
    that appear in the expression of $f k_I$ in terms of $\beta$.
    Then
    $\cM_f(I,J) = 1$ if $J$ is the interval with the smallest length between the ones with the largest right endpoint in $\mathcal{J}$.
\end{proposition}

\begin{proof}
    Since $f$ is non-null,
    there exists a finite, non-empty set $\{\beta_i\}_{i = 1 \ldots n}$ and $t\in I$ such that
    $ f_t(1_k) = \sum_{i=1}^n x_i
    \beta_{it}^1$, for some $x_1,\dots,x_n\in k\setminus\{0\}$. Notice that the finiteness condition follows from  considering $\beta^{-1}_t f_t(1_k)$
    which can only have finite non-zero coordinates, since it lies in a direct sum of vector spaces.
\\
    Now, sort in decreasing order
    the set $\{ b' : \exists i \text{ with } \beta_i \sim \interval{\cdot}{b'}\}$ and define $b, d$ as the first and second value respectively.
    Then
    there is a subset $\Lambda \subset \{1, \ldots n\}$,
    such that, for $i \in \Lambda$, $\beta_i \sim \interval{\cdot}{b}$.
    We have that
    $f_s(1_k) = \sum_{i\in\Lambda} x_i
    \beta_{is}^1$ for  any $s \in \interval{d}{b}$.
    Now, let 
    $J$
    be the interval with the 
    smallest length in
    $\{ J'
    : \exists i\in \Lambda,\, \beta_i \sim 
    J'\} $.
    Then,
    by definition of $U^\pm_{J}$,
    we have that for $s \in \interval{d}{b}$, 
    $f_s(1_k)\in U^+_{Js}$
    but $f_s(1_k) \notin U^-_{Js}$.
    Then
    there is $K$ with 
    ${\interval{d}{b}} \subset K\subset I \cap J$ 
    such that
    $k_{K} \simeq X_{IJ}$.
    This implies that $\cM_{f}(I,J) = 1$,
    and by Theorem~\ref{the:well} it must be the only non-zero case.
\end{proof}

However, as the following example shows, 
$\cM_f$ does not always induce a unique partial matching,
since if there are nested bars, $\sum_{I \in S_V} \cM_{f}(I,J)$ can be greater than $n_J$
(see Remark~\ref{rem:partial}).

\begin{example}\label{ex:notmatch}
Consider the morphism of persistence modules
given by
the following commutative diagram:
\begin{equation*}
		\begin{tikzcd}
			U \\
			V \arrow[u, "f"]
		\end{tikzcd}
		\simeq 
		\begin{tikzcd}[/tikz/column 3/.style={column sep=-0.5em}]
			k \arrow[r] & k \arrow[r] & 0 \\
			k \arrow[r, "\sbm{1 \\ 0 }"]  \arrow[u] & k^2 \arrow[r, "\sbm{ 1 \; 0 }"] \arrow[u, swap, "\sbm{ 1 \; 1 }"] & k \arrow[u] & .
		\end{tikzcd}
	\end{equation*}
	Notice that $\mtb(V) = \{([1,3],1), ([2,2],1)\}$ and $\mtb(U) = \{([1,2],1)\}$.
	Let $I=[1,3]$, $K=[2,2]$ and $J=[1,2]$.
	Then
	\begin{align*}
	    fV_{I}^+ \cap U^+_{J} &= k \rightarrow k \rightarrow 0, \\
	    fV_{I}^- \cap U^+_{J} + fV_{I}^+ \cap U^-_{J} &= 0 \rightarrow 0 \rightarrow 0, \\
	    fV_{K}^+ \cap U^+_{J} &= 0 \rightarrow k \rightarrow 0, \\
	    fV_{K}^- \cap U^+_{J} + fV_{K}^+ \cap U^-_{J} &= 0 \rightarrow 0 \rightarrow 0 .
	\end{align*}
	Taking the direct limit in $d=2^+$, we have
	\[
	    \cM_f(I,J) = 1 \quad \text{ and,} \quad \cM_f(K,J) = 1.
	\]
	Then 
	$\cM_f(I,J)+\cM_f(K,J)>n_J=1$.
\end{example}

As the following theorem shows, if there are no nested bars then the block function $\cM_{f}$ induces a partial matching.

\begin{theorem}\label{the:nnested}
  Consider an ordered set of intervals, $\{ \dec{a^i, b^i} \}_{i \in \Gamma}\,,$ and an interval $J = \dec{c,d}$, such that for all $i \leq j$, $a^i \leq a^j$ and $b^i \leq b^j$, and $\sup_{i \in \Gamma} \{ a^i \} < d$.
  Then  \[
    \sum_{i \in \Gamma} \cM_{f}(\dec{a^i, b^i}, J) \leq n_J\ .
  \]
\end{theorem}

\begin{proof}
   Notice that we can assume without loss of generality that $c \leq a^i \leq d \leq b^i$ for all $i \in \Gamma$, since otherwise $X_{\dec{a^i, b^i} Jt} = 0$.
    Our strategy is similar to the proof of Theorem~\ref{the:well}.
      We define a set of subspace pairs of $U_{Jt}$, $ \{(A_{\dec{a^i, b^i}t}, B_{\dec{a^i, b^i}t}):i\in\Gamma\}$, for all $t\in\dec{e,d}$ where $e = \sup_{i \in \Gamma} \{ a^i \}$.
    We prove that it is a disjoint set of
    sections of $U_{Jt}$
    and that $B_{\dec{a^i, b^i}t}/ A_{\dec{a^i, b^i}t} \simeq X_{\dec{a^i, b^i} J t}$.
    Then, by Lemma~\ref{lem:subsumcond},
    \[
        \bigoplus_{i \in \Gamma} X_{\dec{a^i, b^i} Jt} \hookrightarrow U_{Jt}\ .
    \]
   As a consequence, using Lemma~\ref{app:commute-direct-sum} and Lemma~\ref{lem:short}, we obtain the inclusion
    \[
        \varinjlim_{t \in \dec{e,d}} \bigoplus_{i \in \Gamma} X_{\dec{a^i, b^i} J
        t} = \bigoplus_{i \in \Gamma} \varinjlim_{t \in \dec{e,d}} X_{\dec{a^i, b^i} J
        t}  {\hookrightarrow}  
        \, \varinjlim_{t \in \dec{e,d}}U_{Jt} \ ,
    \]
    Thus,
    \[
        \sum_{i \in \Gamma}\cM_{f}(\dec{a^i, b^i}, J) = 
        \dim \left( \bigoplus_{i \in \Gamma} \varinjlim_{t \in \dec{e,d}} X_{\dec{a^i, b^i} J
        t} \right) \leq 
        \dim \, \varinjlim_{t \in \dec{e,d}} U_{Jt} = n_J.
    \]
    We define $A_{\dec{a^i, b^i} t}$ and $B_{\dec{a^i, b^i} t}$ as follows:
    \[
        A_{\dec{a^i, b^i} t} := \dfrac{fV_{\dec{a^i, b^i} t}^- \cap U_{Jt}^+}{fV_{\dec{a^i, b^i} t}^- \cap U_{Jt}^-}\ , \qquad  B_{\dec{a^i, b^i} t} := \dfrac{fV_{\dec{a^i, b^i} t}^+ \cap U_{Jt}^+}{fV_{\dec{a^i, b^i} t}^+ \cap U_{Jt}^-}\ .
    \]
    Notice that $A_{\dec{a^i, b^i} t} \hookrightarrow B_{\dec{a^i, b^i} t}$ since $(fV_{\dec{a^i, b^i} t}^- \cap U_{Jt}^+) \cap (fV_{\dec{a^i, b^i} t}^+ \cap U_{Jt}^-) = fV_{\dec{a^i, b^i} t}^- \cap U_{Jt}^-$.
    To prove that 
    $\{(A_{\dec{a^i, b^i}t}, B_{\dec{a^i, b^i}t}):i\in\Gamma\}$  is a disjoint  set of
    sections 
    of $U_{Jt}$\,,
    let us see that
    $B_{\dec{a^i, b^i} t} {\hookrightarrow} U_{Jt}$ and
    $B_{\dec{a^i, b^i} t} {\hookrightarrow} A_{\dec{a^j, b^j} t}$ whenever $i < j$.
    The first injection is true since
    \[
        fV_{\dec{a^i, b^i} t}^+ \cap U_{Jt}^+ \hookrightarrow  U_{Jt}^+ \quad \text{  and } \quad fV_{\dec{a^i, b^i} t}^+ \cap U_{Jt}^+ \cap U_{Jt}^- = fV_{\dec{a^i, b^i} t}^+ \cap  U_{Jt}^-\ .
    \]
        Recall that the intervals in the set $\{\dec{a^i, b^i}\}_{i \in \Gamma}$ are all different, which implies that for $i < j$, either $a^i < a^j$ or $b^i < b^j$. Consequently, we have that $V_{\dec{a^i, b^i} t}^+ \hookrightarrow V_{\dec{a^j, b^j} t}^-$ and $f V_{\dec{a^i, b^i} t}^+ \hookrightarrow f V_{\dec{a^j, b^j} t}^-$.
    Then, 
    \[
        \big(fV_{\dec{a^i, b^i} t}^+ \cap U_{Jt}^+\big) \cap \big(fV_{\dec{a^j, b^j} t}^- \cap U_{Jt}^-\big) = fV_{\dec{a^i, b^i} t}^+ \cap U_J^-
    \]
        and $fV_{\dec{a^i, b^i} t}^+ \cap U_{Jt}^+ \hookrightarrow fV_{\dec{a^j, b^j} t}^- \cap U_{Jt}^+$,
    so there are inclusions $B_{\dec{a^i, b^i} t} {\hookrightarrow} A_{\dec{a^j, b^j} t}$. Thus, $\{(A_{\dec{a^i, b^i}t}, B_{\dec{a^i, b^i}t}):i\in\Gamma\}$
    is a disjoint set of
    sections of $U_{Jt}$.  
\end{proof}

The contraposition of the previous result shows the already mentioned fact that, when there are nested intervals, $\cM_f$ may not induce a partial matching.

\begin{corollary}
  If, for a given set of intervals $S\subseteq S_V$, we have that
  \[
    \sum_{I \in S} \cM_{f}(I,J) > n_J,
  \]
  then there are at least two nested intervals in $S$.
\end{corollary}

\subsection{Inducing a partial matching}\label{sec:inducing}

Although $\cM_f$ does not
always induce a partial matching, we can use it to obtain a new block function, $\widetilde{\cM}_f$,
which does.
 Firstly, let $\mathcal{J}= \big\{J \in S_U : \sum_{I\in S_V} \cM_f(I,J) > n_J \big\}$. 
 For each $J \in \mathcal{J}$,
 we create the multiset
\[
    \mtb_J = \big(S_J, \widetilde{m}\,\big) = \big\{\big(I, \widetilde{m}_I\big) :\, I \in S_V,\text{ with } \widetilde{m}_I = \cM_f(I,J) \neq 0 \big\}\ ,
\]
and calculate a partial matching, 
\[
    \sigma_J : \rep \mtb_J \rightarrow \rep \{J, n_J\}\ ,
\]
depending on the application. For
example, one could consider
$\sigma_J$ that provides
the bottleneck distance between $\mtb_J$ and $\{(J, n_J)\}$.
Once we have a partial matching $\sigma_J$ for each $J$,
we define the new block function as:
\[
    \widetilde{\cM}_f(I,J) =
    \left\{
    \begin{array}{c l}
        \#\big\{ i : \, \exists j \text{ with } \sigma_J(I_i) = J_j \big\} & \text{ if } J \in \mathcal{J} \text{ and } I \in S_J \\
        \cM_f(I,J) & \text{ otherwise }
    \end{array}
    \right.
\]

Note that, by definition, $\sum_{I\in S_V} \widetilde{\cM}_f(I,J) \leq n_J$ and, since $\widetilde{\cM}_f(I,J) \leq \cM_f(I,J)$, we have that $\sum_{J\in S_U} \widetilde{\cM}_f(I,J) \leq m_I$ by Theorem~\ref{the:well}.
Then, we can induce directly a partial matching using $ \widetilde{\cM}_f$.
\begin{example}
    Let us consider a pair of persistence morphisms
    $f,g:V \to U$
    where $f$ is given by the following commutative diagram:
    \[
    \begin{tikzcd}
    0
    \arrow[r] & 
       0
       \arrow[r] &  0 \arrow[r] & 
          0         \\
    k
    \arrow[r]\arrow[u] &   k^2
    \arrow[r]\arrow[u] & k \arrow[r]\arrow[u]
    & 
      0
    \arrow[u]
    \end{tikzcd}
    \oplus
    \begin{tikzcd}
    k
    \arrow[r] & 
       k
       \arrow[r] &  0 \arrow[r] & 
          0         \\
    k
    \arrow[r]\arrow[u] &   k
    \arrow[r]\arrow[u] & k \arrow[r]\arrow[u]
    & 
      k
    \arrow[u]
    \end{tikzcd}
    \]
    and $g$ is given by:
    \[
    \begin{tikzcd}[/tikz/column 4/.style={column sep=-0.5em}]
    k
    \arrow[r] & 
    k
    \arrow[r] &  
    0 
    \arrow[r] & 
    0         
    \\
    k^2
    \arrow[r, "\sbm{ 1 \; 0 \\ 0 \; 0 \\ 0 \; 1  }"]
    \arrow[u, "\sbm{ 1 \; 1}"] &   
    k^3
    \arrow[r, "\sbm{ 0 \; 1 \; 0 \\ 0 \; 0 \; 1}"]
    \arrow[u, "\sbm{ 1 \; 1\; 1}", swap] & 
    k^2 
    \arrow[r, "\sbm{ 0 \; 1}"]\arrow[u] & 
    k
    \arrow[u] & .
    \end{tikzcd}
    \]
    Note that the image of $f$ and $g$ is the same, then, by \cite[Prop.~5.4]{induced1},
    $\chi_f$ is equal to  $\chi_g$. In particular, it is the same matching as the one induced by $\cM_f$,
    $
        [1,4] \mapsto [1,2].
    $
    However, $\cM_g$ gives the following non-null values,
    \[
        \cM_g([1,4], [1,2]) = 1, 
        \qquad 
        \cM_g([2,3], [1,2]) = 1.
    \]
    We have two options: either match
    $[1,4]$ with $[1,2]$
    or $[2,3]$ with $[1,2]$.
    If we choose the first option, we obtain the same partial matching as the one induced by ${\cal M}_f$. If we choose the second option by  matching the longest bars, then we get a different partial matching.
\end{example}

\section{A matrix method for computing \texorpdfstring{$\cM_f$}{Mf} \texorpdfstring{for p.f.d. persistence modules}{for p.f.d. persistence modules}}
\label{sec:matrix}

The main goal in this section is to provide a combinatorial method, based on matrix column additions, to obtain the block function $\cM_f$.
To this aim, we limit ourselves to persistence modules of finite vectors spaces
indexed by $\R$, also known as \emph{pointwise finite-dimensional} (p.f.d.) persistence modules~\cite{decomposition}.

Let $I=\dec{a,b}$ and $J=\dec{c,d}$ be intervals of $\R$.
We assume that $c \leq a \leq d \leq b$ (otherwise, $X_{IJ} = 0$).
Let $f: V \rightarrow U$ be a
morphism  between persistence modules together with a pair of persistence bases $\cA = \{ \alpha_i \}_{i \in \Lambda}$ for $V$ and $\cB = \{ \beta_i \}_{i \in \Gamma}$ for $U$.
For a fixed $t$, let us focus on the following composition of linear maps:
\begin{equation}\label{diag:induced_f_abcd}
\begin{tikzcd}[column sep=1.5cm]
    V^+_{It} \arrow[r, hookrightarrow, "\iota_{It}"] &
    V_t \arrow[r, "f_t"] &
    U_t \arrow[r, twoheadrightarrow, "\pi_{Jt}"] &
    U_t \big/ U^-_{Jt} \ .
\end{tikzcd}
\end{equation}

By Proposition~\ref{prop:A-plus-minus}, $\cA^+_{It}$ is a persistence basis for $V^+_{It}$ and $\cB_t \setminus \cB^-_{Jt}$ is a persistence basis for $U_t \big/ U^-_{Jt}$
using also  Lemma~\ref{lem:quotient-sub-base}. 
Therefore, we consider the associated matrix ${\cal L}_{IJt}$ of the composition~(\ref{diag:induced_f_abcd}) on the bases $\cA^+_{It}$ and $\cB_t \setminus \cB^-_{Jt}$:
\begin{equation}
\label{matrix:M^f_(ab)t}
{\cal L}_{IJt} :=
\left(
\begin{array}{c|cc}
 &
\cA^-_{It} &
\cA_{It}
\\
\hline
\cB_{Jt} &
1 \cellcolor{blue!20} &
2 \cellcolor{green!20}
\\
\cB_t \setminus \cB^+_{Jt} &
*  &
*
\end{array}
\right)
\end{equation}

We define the reduced matrix ${\cal N}_{IJt}$ that one obtains after a
Gaussian elimination of ${\cal L}_{IJt}$ by using left to right column additions.
Then we consider the following submatrices of ${\cal N}_{IJt}$:
\begin{itemize}
  \item $\cR^+_{IJt}$ := matrix restricted to the rows of ${\cal N}_{IJt}$ associated to $\cB_{Jt}$ and the columns from $\cA^+_{I} = \cA_{I} \cup 
  \cA^-_{I}$ which are zero on the rows associated to $\cB_t \setminus \cB^+_{Jt}$.
  \item $\cR^-_{IJt}$ := matrix restricted to the rows of ${\cal N}_{IJt}$ associated to $\cB_{Jt}$ and the columns from $\cA^-_{I}$ which are zero on the rows associated to $\cB_t \setminus \cB^+_{Jt}$.
  \item $\cR_{IJt}$ := matrix restricted
  to the rows of ${\cal N}_{IJt}$ associated to $\cB_{Jt}$ and the columns from $\cA_{I}$ which are zero on the rows associated with $\cB_t \setminus \cB^+_{Jt}$.
\end{itemize}

The submatrix $\cR^+_{IJt}$ is contained within the block regions \colorbox{blue!20}{$1$} and \colorbox{green!20}{$2$} from 
Expression~(\ref{matrix:M^f_(ab)t}), while $\cR^-_{IJt}$ is contained within the block \colorbox{blue!20}{$1$} and $\cR_{IJt}$ within the block \colorbox{green!20}{$2$}.
Let $\biggen{\cR^{\pm}_{IJt}}$ and $\biggen{\cR_{IJt}}$ the subspaces of
$\gen{\cB_{Jt}}$ which are generated by the columns of the respective matrices.

To find $X_{IJt}$, we use the following quotients
\[
X^+_{IJt} =
\dfrac{fV^+_{It} \cap U^+_{Jt} + U^-_{Jt}}{
  U^-_{Jt}} \hspace{0.2cm} \mbox{ and } \hspace{0.2cm}
X^-_{IJt} =
\dfrac{fV^-_{It} \cap U^+_{Jt} + U^-_{Jt}}{
  U^-_{Jt}}\ ,
\]
\noindent
which satisfy,
\[
X_{IJt} =
\dfrac{fV^+_{It} \cap U^+_{Jt}}{
  fV^-_{It} \cap U^+_{Jt} + fV^+_{It} \cap U^-_{Jt}}
\simeq
\dfrac{fV^+_{It} \cap U^+_{Jt} + U^-_{Jt}}{
  fV^-_{It} \cap U^+_{Jt} + U^-_{Jt}} \simeq \dfrac{X^+_{IJt}}{X^-_{IJt}}\ .
\]

In the following, we work with the subindex sets $\Lambda^+_{It} \subseteq \Lambda$ and $\Gamma_{Jt} \subseteq \Gamma$ so that $\cA^+_{It} = \{ \alpha^1_{it}\}_{i \in \Lambda^+_{It}}$ and also
$\cB_{Jt} = \{ \alpha^1_{it}\}_{i \in \Gamma_{Jt}}$.

\begin{proposition}~\label{prop:X-plus-minus-abcd}
For all $t \in I\cap J$, the following equalities hold:
\begin{enumerate}[label=\emph{\alph*})]
\item\label{XR-plus}   $X^+_{IJt} = \biggen{\cR^{+}_{IJt}}$\ ,
\item\label{XR-minus}  $X^-_{IJt} = \biggen{\cR^{-}_{IJt}}$\ .
\end{enumerate}
\end{proposition}

\begin{proof}
Let us prove first~(\ref{XR-plus}), and, in particular, the inclusion $\supseteq $.
Consider $\gamma \in \cR^{+}_{IJt}$.
By construction, we must have $\gamma = \pi_{Jt} \circ f_t \circ \iota_{It} \big(\sum_{i \in \Lambda^+_{It}} x_i \alpha^1_{it} \big)$ for some coefficients $x_i \in k$ for all $i \in \Lambda^+_{It}$.
This implies $\gamma \in \big(fV^+_{It} + U^-_{Jt}\big) \big/ U^-_{Jt}$.
Also, by hypotheses, there exist coefficients $
x'_i \in k$ for all $i \in \Gamma_{Jt}$, so that
$\gamma = \sum_{i \in \Gamma_{Jt}} 
x'_i \beta^1_{it}$ and so $\gamma \in U_{Jt}$.
Altogether, $\gamma \in X^+_{IJt}$ and thus the claim follows since $X^+_{IJt}$ is a well-defined subspace of 
$U_{Jt}$.
\\
Let us show now that the inclusion $\subseteq$ from~(\ref{XR-plus}) holds.
Consider $\sigma \in X^+_{IJt}$. Since $\sigma \in U_{Jt}$, it can be written in terms of the persistence basis
$\cB_{Jt}$ and so we might consider $\sigma$ as a column $C_\sigma$ of coordinates in $\cB_t \setminus \cB^-_{Jt}$ whose coordinates in $\cB_t \setminus \cB^+_{Jt}$ are all zero.
On the other hand, since $\sigma \in \big(fV^+_{It} + U^-_{Jt}\big) \big/ U^-_{Jt}$, we
can
write $C_\sigma$ as a combination of columns from
${\cal L}_{IJt}$.
Since elementary column operations preserve rank, the reduced matrix 
${\cal N}_{IJt}$
of ${\cal L}_{IJt}$
still generates $C_\sigma$ by the Rouch\'e-Frobenius theorem.
In particular, since the coordinates of $C_\sigma$ in $\cB_t \setminus \cB^+_{Jt}$ are all zero, we 
can
write $C_\sigma$ in terms of $\cR^{+}_{IJt}$, and the claim follows.
\\
One might repeat the argument to show that the equality~(\ref{XR-minus}) holds, changing $\cR^{+}_{IJt}$ for $\cR^{-}_{IJt}$ and writing $X^-_{IJt}$ instead of $X^+_{IJt}$ in the above two paragraphs. On the second paragraph, $C_\sigma$ must be written in terms of columns from the block \colorbox{blue!20}{$1$} 
in Expression~(\ref{matrix:M^f_(ab)t}).
\end{proof}

Now, 
we characterize $X_{IJt}$ in terms of a matrix that can be easily computed.
\begin{theorem}\label{the:Rmatrix}
$X_{IJt} \simeq \biggen{\cR_{IJt}}$ for all $t \in I\cap J$.
\end{theorem}

\begin{proof}
It follows from Proposition~\ref{prop:X-plus-minus-abcd} and Lemma~\ref{lem:quotient-sub-base} and the fact that the corresponding sets of columns satisfy $\cR_{IJt} = \cR^{+}_{IJt} \setminus \cR^{-}_{IJt}$.
\end{proof}

Finally, due to Theorem~\ref{the:Rmatrix}, we have the following result.

\begin{corollary}
Assuming that $\cM_f(I, J)$ is equivalent to the dimension of $X_{IJt}$ for some $t\in I\cap J$, then:
\[
    \cM_f(I, J) = \text{ the number of pivots in }  \cR_{IJt}.
\]
\end{corollary}

\begin{example}
Let $k$ be a field of characteristic $\neq 2$, e.g. $k = \mathbb{Z}_3$.
Consider 
$V \simeq k_{[1,4]} \oplus k_{[2,3]} \oplus k_{[2,5]}$ and $U \simeq k_{[0,3]} \oplus k_{[1,4]}$. We take the canonical bases $\cA=\{\alpha_1, \alpha_2, \alpha_3\}$ for $V$ and $\cB=\{\beta_1, \beta_2\}$ for $U$, see below. 
\begin{center}
\vspace{0.3cm}
    \begin{tikzpicture}[yscale=0.6]
        \draw[->, line width=0.1em] (-0.5,-0.5)--(11,-0.5);
          \foreach \i [evaluate=\i as \x using \i*2] in {0,1,2,3,4,5}{
            \draw[color=gray] (\x, -0.6)--(\x, 3);
            \draw[line width=0.1em] (\x, -0.7)--(\x, -0.3);
            \node at (\x, -1) {\i};
        }
        \node at (-0.5,2.5) {$\beta_1$};
        \draw[line width=0.2em,] (0,2.5) -- (6,2.5);
        \fill (0,2.5) ellipse (0.24em and 0.4em);
        \filldraw[draw=black, fill=black] (6,2.5) ellipse (0.24em and 0.4em);
        \node at (1.5,2) {$\beta_2$};
        \draw[line width=0.2em] (2,2) -- (8,2);
        \fill (2,2) ellipse (0.24em and 0.4em);
        \filldraw[draw=black, fill=black] (8,2) ellipse (0.24em and 0.4em);
        \node at (1.5,1) {$\alpha_1$};
        \draw[line width=0.2em] (2,1) -- (8,1);
        \fill (2,1) ellipse (0.24em and 0.4em);
        \filldraw[draw=black, fill=black] (8,1) ellipse (0.24em and 0.4em);
        \node at (3.5,0.5) {$\alpha_2$};
        \draw[line width=0.2em] (4,0.5) -- (6,0.5);
        \fill (4,0.5) ellipse (0.24em and 0.4em);
        \filldraw[draw=black, fill=black] (6,0.5) ellipse (0.24em and 0.4em);
        \node at (3.5,0) {$\alpha_3$};
        \draw[line width=0.2em] (4,0) -- (10,0);
        \fill (4,0) ellipse (0.24em and 0.4em);
        \filldraw[draw=black, fill=black] (10,0) ellipse (0.24em and 0.4em);
        \draw[dashed] (-1, 1.5) -- (11.5, 1.5);
        \node (U) at (-1.5, 2.5) { $U$};
        \node (V) at (-1.5, 0) { $V$};
    \end{tikzpicture}
\end{center}
Next,  consider a morphism $f:V\rightarrow U$ between persistence modules 
given by
the following commutative diagram:
\begin{equation*}		
\begin{tikzcd}
			U \\
			V \arrow[u, "f"]
		\end{tikzcd}
		\simeq 
		\begin{tikzcd}
			k \arrow[r, "\sbm{1 \\ 0 }"] & k^2 \arrow[r, "\Id"] &  k^2 \arrow[r,"\Id"] & k^2 \arrow[r,"\sbm{ 0 \; 1 }"] & k \arrow[r] &0 \\
		0 \arrow[r]  \arrow[u, swap] &	k \arrow[r,swap, "\sbm{1 \\ 0\\0 }"]  \arrow[u, swap, "\sbm{ 1 \\ 1 }"] &k^3 \arrow[r,swap, "\Id"]  \arrow[u, swap,"\sbm{1\;1\;2\\ 1\;0\;1}"] &k^3 \arrow[r,swap, "\sbm{1\;0\;0\\ 0\;0\;1 }"]  \arrow[u, swap,"\sbm{1\;1\;2\\ 1\;0\;1 }"] & k^2 \arrow[r,swap, "\sbm{ 0 \; 1 }"] \arrow[u, swap, "\sbm{ 1 \; 1 }"] & k \arrow[u]
		\end{tikzcd}
	\end{equation*}
It leads to the matrices ${\cal L}_{IJt}$
for all $I\in S_V$ and all $J=[s,t]\in S_U$:
$$
{\cal L}_{[1,4][0,3]3} =
\sbm{1 \\ 1}\ , \hspace{1cm} 
{\cal L}_{[2,3][0,3]3} = \sbm{ 1 \\ 0} \ , \hspace{1cm} 
{\cal L}_{[2,5][0,3]3} = \sbm{ 1 \; 1 \; 2 \\ 1 \; 0 \; 1} \ ,
$$
$$
{\cal L}_{[1,4][1,4]4} =
\sbm{1}\ , \hspace{1cm} 
{\cal L}_{[2,3][1,4]4} = \emptyset \ , \hspace{1cm} 
{\cal L}_{[2,5][1,4]4} = \sbm{ 1 \; 1} \ .
$$
Next, we reduce ${\cal L}_{[2,5][0,3]3}$ and ${\cal L}_{[2,5][1,4]4}$, so that we obtain:
$$
{\cal N}_{[2,5][0,3]3} = \sbm{ 1 \; 1 \; 0 \\ 1 \; 0 \; 0} \ , \hspace{1cm}
{\cal N}_{[2,5][1,4]4} = \sbm{ 1 \; 0} \ .
$$
Altogether, we obtain:
$$
{\cal R}_{[1,4][0,3]3} = 
{\cal R}_{[2,3][1,4]4} 
= 
\emptyset \ , \hspace{1cm}
{\cal R}_{[1,4][1,4]4} 
= 
{\cal R}_{[2,3][0,3]3} 
= \sbm{ 1 }\ ,
$$ 
$$
{\cal R}_{[2,5][0,3]3} = {\cal R}_{[2,5][1,4]4}
=
\sbm{ 0 }\ .
$$
Hence, a partial matching that can be obtained from $\cM_f$ is:
\[
[1,4] \mapsto [1,4]\;\mbox{ and }\;
[2,3]\mapsto [0,3]\ .
\]
Notice that, in this example, $\Ima(f) \simeq k_{[1,4]} \oplus k_{[2,3]}$ and so the induced partial matching ${\cal X}_f$ is different from ${\cal M}_f$, as is given by
$$[1,4] \mapsto [1,4]\;\mbox{ and }\;
[2,5]\mapsto [0,3]\ .$$
\end{example}

\section{Conclusions and future work}\label{sec:future}

\indent In this paper, we have provided an entirely algebraic definition of
a 
block function
$\cM_f$ induced by a morphism between two persistence modules $V$ and $U$.
We have
proven that 
$\cM_f$ is linear with respect to the direct sum of morphisms.
We have also discussed how to derive partial matchings from  $\cM_f$ and provided a combinatorial method to compute $\cM_f$ based on matrix operations.

We are developing an efficient algorithm to compute partial matchings from morphisms between persistence modules, taking Section~\ref{sec:matrix} as a starting point.
A specially interesting case is the morphism obtained from a function between point clouds
and the corresponding 
Vietoris-Rips filtrations.
Moreover, since in some special cases
the proposed induced partial matching is defined ad-hoc from $\cM_f$ and not algebraically, we are investigating  if it is possible to 
find
an alternative algebraic definition of $\cM_f$ such that $\cM_f$ always gives directly a partial matching.

Some additional research directions can be followed from this paper, for example, the relation between $\cM_f$ and indecomposable modules.
We believe that, when $\sum_{I} \cM_f(I,J) > n_J$,
there should exist non-trivial indecomposable modules containing $J$.
Finally,
relations between persistence modules which come from dynamical systems are
not usually given by morphisms but
through diagrams of the form $V \leftarrow W \rightarrow U$ \cite{sampled, self}.
Then, another research line could be to construct block functions in that context.

\paragraph{\textbf{Acknowledgements and Funding}}
The authors
would like to thank Victor Carmona for his valuable help during the development of this research.
This research was funded by 
the Agencia Estatal de Investigación/10.13039/501100011033
grant PID2019-107339GB-100 and the Agencia Andaluza del Conocimiento grant P20-01145.
The author M. Soriano-Trigueros is partially funded by the grant VI-PPITUS from University of Seville.

\bibliographystyle{abbrv} 
\bibliography{references}


\begin{appendices}


\section{Direct Limits}\label{app:direct}


Let $\cJ$ be filtered category, and consider a functor $F:\cJ \rightarrow \Vect_k$. We consider the direct limit $\varinjlim_{j\in \cJ} F \in \Vect_k$, which is a particular case of the general definition of colimits ( see~\cite[Sec.~3.1.]{riehl2017category}
and \cite[Sec.~3.]{working}). Since $\Vect_k$ is an abelian category, $\varinjlim_{i\in \cJ} F$ exists.
Examples of colimits include direct sums of vector spaces, $\bigoplus_{j \in \cJ} V_i$, and cokernels, $\coker(f)$, of linear maps $f:A\rightarrow B$. 
By~\cite[Thm.~3.8.1]{riehl2017category}, colimits commute with colimits, deducing Lemma~\ref{app:commute-direct-sum} below.
\begin{lemma}\label{app:commute-direct-sum}
Consider a set of functors $\{ F_i:\cJ \rightarrow \Vect_k\}_{i \in \Gamma}$, then:   
    \[
    \varinjlim_{j \in \cJ} \bigoplus_{i \in \cI} F_i(j) \simeq \bigoplus_{i \in \cI} \varinjlim_{j \in \cJ} F_i(j)\ ,
    \]
\end{lemma}

\begin{lemma}\label{app:commute-quotients}
Consider a pair of functors $F_1, F_2:\cJ\rightarrow \Vect_k$ such that $F_1(j)\subseteq F_2(j)$ for all $j \in \cJ$. Thus:
\[
    \varinjlim_{j \in \cJ} \left( F_2(j) \big/ F_1(j) \right) \simeq \varinjlim_{j \in \cJ} F_2(j) \big/\varinjlim_{j \in \cJ} F_1(j) \ . 
\]
\end{lemma}

\begin{proof}
This holds since $F_2(j)\big/ F_1(j)$ is a cokernel, and so a colimit.
\end{proof}

\begin{lemma}\label{lem:short}
	Let U, V and W be persistence modules indexed by an interval $\dec{a,b}$, and let $d\in\E$ with
	$a <d \leq b$.
    A short exact sequence of persistence modules
	\[
		0 \rightarrow V \rightarrow U \rightarrow W \rightarrow 0\ ,
	\]
	produces the following short exact sequence of vector spaces:
	\[
	0 \rightarrow \varinjlim_{t \in \dec{a,d}} V_t \rightarrow \varinjlim_{t \in \dec{a,d}} U_t \rightarrow \varinjlim_{t \in \dec{a,d}} W_t \rightarrow 0\ .
	\]
	In particular, if 
	$V_t \hookrightarrow U_t$ for all $t\in\dec{a,d}$, then 
	\[
	    \varinjlim_{t \in \dec{a,d}} V_t \hookrightarrow \varinjlim_{t \in \dec{a,d}} U_t\ .
    \]
\end{lemma}

\begin{proof}
    Since colimits are the categorical definition of direct limits,  $\Vect_k$ is an abelian category and any totally ordered set is a filtered category, then
    the first result is a direct consequence of the characterization of abelian categories 
    (see 
    \cite[Appendix A.4]{homological}; the original result comes from \cite{abelian}).
    The second
    result follows directly, since injections can be defined in terms of exact sequences.
\end{proof}

\begin{lemma}\label{lem:monos}
	Consider a persistence module 	$V$ with structure maps $\rho$, indexed by an interval $\dec{a,b}$, and 
	let $d\in\E$ such that
	$a <d \leq b$.
	If all structure maps 
	$\rho_{sr}$ with $s,r \in\dec{a,d}$ are injective,
	then
	\[
	    V_s
	    \hookrightarrow
	    \varinjlim_{t \in \dec{a,d}} V_t\,.
    \]
\end{lemma}

\begin{proof}
	Fix $s \in \dec{a,d}$.
    Let us consider the persistence module $C$ as the constant space $V_s$ in $\dec{s^-,d}$ and $0$ in $\dec{a,s^-}$.
	Since all structure maps in $V$ are injective,
	we have
	$
		C \hookrightarrow V.
	$
	Then
	by Lemma~\ref{lem:short},
    \[
		\varinjlim_{t \in \dec{s^-,d}} C_t
		\hookrightarrow \varinjlim_{t \in \dec{s^-,d}}
		V_t.
	\]
	and,  by definition,
	\[
	\varinjlim_{t \in \dec{s^-,d}} C_t
	= V_s
\;	\mbox{ and }\;
	\varinjlim_{t \in \dec{s^-,d}} V_t
	=
	\varinjlim_{t \in \dec{a,d}} V_t,
	\] 
	concluding the proof.
\end{proof}

\begin{lemma}\label{lem:lim-sum-int-commute}
Let $\cJ$ be a filtered category and consider three functors from $\cJ$ to $\Vect_k$ which we call $A$, $B$ and $C$. Further, suppose that $A(j), B(j)$ are both subspaces of $C(j)$ for all $j \in \cJ$. Then
\begin{enumerate}[label=\emph{\alph*})]
    \item\label{lim-inters-commute} $\varinjlim_{j \in \cJ} \big(A(j) \cap B(j)\big) \simeq \left(\varinjlim_{j \in \cJ} A(j) \right) \cap \left(\varinjlim_{j \in \cJ} B(j)\right)\ $, and 
    \item\label{lim-sum-commute} $\varinjlim_{j \in \cJ} \big(A(j) + B(j)\big) \simeq \left(\varinjlim_{j \in \cJ} A(j) \right) + \left(\varinjlim_{j \in \cJ} B(j)\right)\ $.
\end{enumerate}
\end{lemma}

\begin{proof}
First, as $\Vect_k$ is an abelian category,
     filtered direct limits are exact (see~\cite[Appendix A.4]{homological}), and so they commute with kernels and cokernels.
We proceed to prove~(\ref{lim-inters-commute}). For each $j \in \cJ$, consider the exact sequence
\begin{equation}\nonumber
    \begin{tikzcd}[column sep=0.5cm]
        0 \ar[r] & 
        A(j) \cap B(j) \arrow[r, "\iota_j", near start] & 
        A(j) \oplus B(j) \arrow[r, "\Sigma_j", near start] & 
        C(j) 
    \end{tikzcd}
\end{equation}
where $\iota_j$ sends $v \in A(j) \cap B(j)$ to $(v,-v) \in A(j) \oplus B(j)$ and $\Sigma_j$ sends $(v,w) \in A(j) \oplus B(j)$ to $v+w \in C(j)$. Since $A(j) \cap B(j) \simeq \ker(\Sigma_j)$ for all $j \in \cJ$, 
the isomorphism~(\ref{lim-inters-commute}) follows from using
Lemma~\ref{app:commute-direct-sum}. That is,
\begin{multline*}
\varinjlim_{j\in \cJ}\Big( A(j) \cap B(j) \Big) \simeq \varinjlim_{j\in \cJ} \ker \Big(A(j) \oplus B(j) \rightarrow  C(j) \Big) \\
\simeq
\ker\left( \varinjlim_{j\in \cJ} A(j) \oplus \varinjlim_{j\in \cJ}B(j) \rightarrow \varinjlim_{j\in \cJ}  C(j)\right)
\simeq 
\bigg(\varinjlim_{j \in \cJ} A(j)\bigg) \cap  \bigg(\varinjlim_{j\in \cJ}B(j)\bigg)\ .
\end{multline*}
Next, we prove~(\ref{lim-sum-commute}). For each $j \in \cJ$, consider the short exact sequence
\begin{equation}\nonumber
    \begin{tikzcd}[column sep=0.5cm]
        0 \ar[r] & 
        A(j) \cap B(j) \arrow[r, "\iota_j", near start] & 
        A(j) \oplus B(j) \arrow[r, "\Sigma_j", near start] & 
        A(j) + B(j) \ar[r] & 0\ .
    \end{tikzcd}
\end{equation}
Since $A(j)+B(j) \simeq \coker(
\iota_j
)$ for all $j \in \cJ$, 
(\ref{lim-sum-commute}) follows by 
Lemma~\ref{app:commute-direct-sum} together with part~(\ref{lim-inters-commute}). That is,
\begin{multline*}
\varinjlim_{j\in \cJ}\big( A(j) + B(j) \big) \simeq 
\varinjlim_{j\in \cJ} \Big(\coker\big(A(j)\cap B(j) \hookrightarrow A(j) \oplus B(j) \big)\Big) \\
\simeq 
\coker\bigg(\varinjlim_{j\in \cJ} \big(A(j)\big)\cap \varinjlim_{j\in \cJ} \big(B(j)\big) \hookrightarrow \varinjlim_{j\in \cJ} \big(A(j)\big) \oplus \varinjlim_{j\in \cJ} \big(B(j)\big)\big) \bigg) \\
\simeq \varinjlim_{j\in \cJ}\big( A(j)\big) + \varinjlim_{j\in \cJ}\big( B(j)\big)\ . \tag*{\qedhere}
\end{multline*}
\end{proof}

\section{Technical lemmas and proofs related to persistence
bases}\label{app:persistence-basis}

\begin{proof}[Proof of Lemma~\ref{lem:Im-Ker-plus-minus}]
First we prove~(\ref{Im-plus-minus}), i.e. $\Ima^{\pm}_{ct}(V) = \langle \cI^{\pm}_{ct}(\cA)\rangle$ for all $t \in \interval{c}{\infty}$.
We start by showing that the inclusion $\supseteq$ holds.
Given $\alpha^1_{it} \in \cI^+_{ct}(\cA)$ (resp. $\alpha^1_{it} \in \cI^-_{ct}(\cA)$), we have that $\rho_{st}(\alpha^1_{is}) = \alpha^1_{it}$ for all $s \in \dec{c, t^+}$ (resp. for some $s \in \dec{-\infty, c}$). In particular, $\alpha^1_{it} \in \Ima^+_{ct}(V)$
(resp. $\alpha^1_{it} \in \Ima^-_{ct}(V)$ ). Since $\Ima^\pm_{ct}(V)$ are well-defined subspaces of $V_t$, we obtain $\Ima^+_{ct}(V) \supseteq \gen{ \cI^+_{ct}(\cA)} $ (resp. $\Ima^-_{ct}(V) \supseteq \gen{ \cI^-_{ct}(\cA)} $).
\\
Let us now show that $\subseteq$ holds. Consider $v \in \Ima^\pm_{ct}$.
Let any $s \in \dec{c, t^+}$ (resp. some $s \in \dec{-\infty, c}$)
such that there exists $w \in V_s$ with $\rho_{st}(w)=v$.
Since $\cA_t = \{\alpha^1_{it}\}_{i \in \Lambda(t)}$ is a persistence basis for $V_t$, there exists a subindex set
$\Gamma \subseteq \Lambda(t)$, together with coefficients $x_i
\in k \setminus \{0\}$ for all $i \in \Gamma$ such
that $\sum_{i \in \Gamma} x_i \alpha^1_{it} = v$.
Similarly, there exists a subset
$K \subseteq \Lambda(s)$, together with coefficients $x'_i
\in k \setminus \{0\}$ for all $i \in K$ such
that $\sum_{i \in K} x'_i
\alpha^1_{is} = w$.
Altogether, we obtain:
\[
\rho_{st}(w)= \sum_{i \in K} x'_i
\rho_{st}(\alpha^1_{is})
= \sum_{i \in K \cap \Lambda(t)} x'_i
\alpha^1_{it} = v =
\sum_{i \in \Gamma} x_i
\alpha^1_{it}\ ,
\]
which, by linear independence of $\cA_t$, implies that 
$
K \cap \Lambda(t) = \Gamma$ and also $x'_i= x_i
$ 
for all $i \in \Gamma$.
Hence, we have that given $i \in \Gamma$ with $\alpha_i \sim \dec{a_i, b_i}$, we must have $s \in \dec{a_i, b_i}$.
In particular, since we picked up any $s \in \dec{c, t^+}$ (resp. some $s \in \dec{-\infty, c}$), we have that
$\alpha_i \in \cI^+_{c}(\cA)$ (resp. $\alpha_i \in \cI^-_{c}(\cA)$) for all $i \in \Gamma$. This implies that
$v \in \gen{\cI^+_{ct}(\cA)}$ (resp. $v \in \gen{\cI^-_{ct}(\cA)}$) as claimed.
\\
The proof of~(\ref{Ker-plus-minus}) is analogous to that of~(\ref{Im-plus-minus}), although for completeness we reproduce it here. 
That is, we are going to show that  $\Ker^{\pm}_{ct}(V) = \langle \cK^{\pm}_{ct}(\cA)\rangle$ for all $t \in \interval{-\infty}{c}$.
So, let us show first that the inclusion $\subseteq$ holds.
Given $\alpha^1_{it} \in \cK^+_{ct}(\cA)$ (resp. $\alpha^1_{it} \in \cK^-_{ct}(\cA)$), we have that $\rho_{ts}(\alpha^1_{it}) = 0$ for all $s \in \dec{c, \infty}$ (resp. for some $s \in \dec{t^-, c}$).
In particular, $\alpha^1_{it} \in \Ker^+_{ct}(V)$ (resp. $\alpha^1_{it} \in \Ker^-_{ct}(V)$). As $\Ker^\pm_{ct}(V)$ are subspaces of $V_t$, we obtain the inclusions
$\Ker^+_{ct}(V) \supseteq \gen{\cK^+_{ct}(\cA)}$ and $\Ker^-_{ct}(V) \supseteq \gen{\cK^-_{ct}(\cA)}$.
Finally, the inclusion $\subseteq$ from~(\ref{Ker-plus-minus}) follows from Lemma~\ref{lem:subbasis}; notice that in the case $\Ker^-_{ct}(V) \subseteq \gen{\cK^-_{ct}(\cA)}$, if $c$ is decorated by $+$ we need to change the decoration to $-$ to apply Lemma~\ref{lem:subbasis}.
\end{proof}
\begin{lemma}\label{lem:intersection-sub-bases}
Consider a basis $\cW$ for a vector space $W$, together with a pair of subsets $\cS, \cT \subseteq \cW$. Then $\gen{\cS}\cap\gen{\cT}=\gen{\cS \cap \cT}$.
\end{lemma}
\begin{proof}
The inclusion $\supseteq$ is clear, so we only need to prove $\subseteq$.
We use the notation $\cW=\{ w_i \}_{i\in \Gamma}$, $\cS = \{ w_i\}_{i \in \Gamma(\cS)}$ and $\cT = \{ w_i\}_{i \in \Gamma(\cT)}$, where $\Gamma(\cS)$ and $\Gamma(\cT)$ denote the corresponding subindex sets of $\Gamma$.
Consider a vector $v \in \gen{\cS}\cap\gen{\cT}$, then there exist coefficients $x_i
\in k$ for all $i \in \Gamma(\cS)$ and $x'_i
\in k$ for all $j \in \Gamma(\cT)$ so
that $v = \sum_{i \in \Gamma(\cS)}x_i
w_i$ and also $v = \sum_{j \in \Gamma(\cT)}x'_i
w_j$.
We define $x_i
= 0$ for all $i \in \Gamma \setminus \Gamma(\cS)$ and also $x'_i
= 0$ for all $j \in \Gamma \setminus \Gamma(\cT)$.
Altogether, we obtain $\sum_{i \in \Gamma(\cS)}x_i
w_i - \sum_{j \in \Gamma(\cT)}x'_i
w_j = \sum_{i \in \Gamma}(x_i
- 
x'_i
) w_i = 0$, and by linear independence of $\cW$, we  have that $x_i
=x'_i
$ for all $i \in \Gamma$.
In particular, $x_i
\neq 0$ if and only if $x'_i
\neq 0$, and so if $x_i
\neq 0$ then $i \in \Gamma(\cS) \cap \Gamma(\cT)$. Therefore  $v = \sum_{i \in \Gamma(\cS) \cap \Gamma(\cT)}x_i
w_i$ and $v \in \gen{\cS\cap \cT}$ as claimed.
\end{proof}

\begin{lemma}\label{lem:quotient-sub-base}
Consider a basis $\cW$ for a vector space $W$, together with a subset $\cS \subseteq \cW$. Then $\gen{\cW \setminus \cS} \simeq W \big/ \gen{\cS}$.
\end{lemma}

\begin{proof}
Define a linear map $\phi:\gen{\cW \setminus \cS} \rightarrow W \big/ \gen{\cS}$ sending $w \in \gen{\cW \setminus \cS}$ to its class $w + \gen{\cS} \in W \big/ \gen{\cS}$. It is clear that $\phi$ is surjective.
On the other hand, 
$\dim(\gen{\cW \setminus \cS}) = \dim(W \big/ \gen{\cS})$ and so $\phi$ 
must be an
isomorphism.
\end{proof}

\end{appendices}

\end{document}